\documentclass[12pt,a4paper]{article}
\usepackage{fullpage}
\usepackage{hyperref}

\linespread{1.5} 
\usepackage{amsmath,amsfonts,amssymb,amsthm,amscd}
\newtheorem{Theorem}{Theorem}

\newtheorem{Lemma}{Lemma}
\newtheorem{Corollary}{Corollary}

\newcommand{\den}{\text{den}}

\newcommand{\R}{\mathbb R}
\newcommand{\Q}{\mathbb Q}
\newcommand{\Z}{\mathbb Z}
\newcommand{\N}{\mathbb N}


\def\qin{\left[n\atop k \right]}

\vskip1cm

\title{On approximation measures of $q$-exponential function}
\author{Leena Leinonen\footnote{The work of all authors was supported by the Academy of Finland, grant 138522,
and the work of L. Leinonen was also supported by Finnish Academy of Science and Letters, 
V\"ais\"al\"a Foundation.},  
Marko Leinonen  
and  Tapani Matala-aho}

\begin{document}

\maketitle
\date{}

\begin{abstract} 
We shall present effective approximations measures for certain infinite products 
related to $q$-exponential function.  There are two main targets. 
First we shall prove an explicit irrationality measure result for the values of 
$q$-exponential function at rational points. 
Then, if we restrict the approximations to rational numbers of the shape $d^s/N$, we may
replace Bundschuh's irrationality exponent $7/3$ by $2+\frac{1}{3+2\sqrt 3}=2.1547...$.
\end{abstract}

\section{Introduction}

Our work considers irrationality measures of the values $\tau=E_q(t)$ of the
$q$-exponential function 
\begin{equation}\label{EULER1}
E_q(t)=\sum_{k=0}^{\infty} \frac{t^k}{\prod_{n=1}^{k}\left(1-q^{n}\right)} = 
\prod_{k=0}^{\infty} \frac{1}{1-q^kt},\quad 0<|t|,\ |q|<1,\ t,q\in\mathbb{Q},
\end{equation}
over the field of rational numbers.

By an irrationality measure of a real number $\tau$ we mean
any function in $N$ bounding $\left| \tau-\frac{M}{N}\right|$, $M, N \in \Z$, 
from below for big enough $N$.
Irrationality exponent of a real number $\tau$ means an exponent $\mu$ for which
there exist positive constants $c$ and $N_0$ such that
\begin{equation*}\label{mudefinition}
\left| \tau-\frac{M}{N} \right| \ge \frac{c}{N^\mu}
\end{equation*}
holds for all $M, N \in \Z$, $N \ge N_0$. 
Further, the asymptotic irrationality exponent $\mu_I(\tau)$ is then the infimum of all such exponents $\mu$. 

We start our considerations by proving a fully explicit irrationality measure for
the $q$-exponential function in the case 
$1/q\in\mathbb{Z}\setminus\{0,\pm 1\}$.

\begin{Theorem}\label{Theorem1}
Let $q=1/d$, where $|d| \in \Z_{\ge 2}$, and $t=u/v\in\Q$, where $u\in \Z$, $v\in \Z_+$,
gcd$(u,v)=1$ and  $0<|t|<1$. 
Then 
\[
\left|E_q (t)- \frac{M}{N}\right| \ge C_1(2|N|)^{-\left(\frac{7}{3}+\varepsilon_1\right)} 
\]
holds for all  $M,N\in \Z$, $|N|\ge 1$, with an explicit constant 
$C_1= C_1(d,v)$ (given later in \eqref{CONSTANTC})
and
\[
\varepsilon_1=\varepsilon_1(v,d, N)=\frac{\frac{22}{3}\log v + 8\log|d|+4\sqrt{(\log 4)\log |d|}}{\sqrt{\frac{3}{2}(\log|d|)\log(2|N|)}} .
\]
\end{Theorem}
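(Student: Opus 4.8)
The plan is to produce, for every integer $n\ge1$, an explicit linear form $\Lambda_n=\mathbf A_nE_q(t)-\mathbf B_n$ with $\mathbf A_n,\mathbf B_n\in\Z$ which is small, nonzero, and not proportional to its successor $\Lambda_{n+1}$, and then to convert the sequence of Diophantine approximations $(\mathbf A_n,\mathbf B_n,\Lambda_n)$ into the stated irrationality measure by the standard linear-form lemma, carrying every constant along explicitly. Writing $E_q(t)=\sum_{k\ge0}t^k/(q;q)_k$ with $(q;q)_k=\prod_{n=1}^k(1-q^n)$, I would build Pad\'e-type approximants $A_n(x),B_n(x)$ of degree $O(n)$ whose remainder $R_n(x):=A_n(x)E_q(x)-B_n(x)$ vanishes to high order at $x=0$; the $q$-difference equation $(1-x)E_q(x)=E_q(qx)$ lets one pass explicitly from level $n$ to level $n+1$ (alternatively one may use a closed $q$-hypergeometric formula or a contour-integral representation), so $A_n$, $B_n$ and the power series of $R_n$ are all available in closed form in terms of $q$-binomial coefficients. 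Specialising $x=t=u/v$ and multiplying through by a suitable $v^{O(n)}|d|^{O(n^2)}$ to clear denominators produces the integers $\mathbf A_n,\mathbf B_n$ and the form $\Lambda_n$.

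The technical core is a trio of explicit estimates. First, an upper bound $\max(|\mathbf A_n|,|\mathbf B_n|)\le Q_n$ with $\log Q_n=2(\log|d|)n^2+c_1n+O(\log n)$: the quadratic term is the cost of clearing the $(q;q)$-denominators of the coefficients, and the linear coefficient $c_1$ assembles the $v$- and $d$-contributions (this is where $\tfrac{22}{3}\log v$ and $8\log|d|$ enter). Second, the smallness $0<|\Lambda_n|\le E_n$ with $\log(1/E_n)=\tfrac32(\log|d|)n^2-c_2n+O(\log n)$; what makes the whole argument work is that the genuine Pad\'e remainder $R_n(t)$ is \emph{superexponentially} small, small enough to beat the $|d|^{O(n^2)}$ denominator blow-up, because its leading Taylor coefficient is a ratio of Hankel determinants of the sequence $(1/(q;q)_k)_k$, which admits a product evaluation. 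Third, $R_n(t)\neq0$ together with the independence $A_n(t)B_{n+1}(t)-A_{n+1}(t)B_n(t)\neq0$, which I would read off an explicit product formula for this Casorati determinant (essentially a monomial in $q$ and $t$, hence nonzero since $0<|t|<1$). The hypotheses $0<|t|<1$ and $1/q=d\in\Z$ with $|d|\ge2$ are used throughout to control both absolute values and denominators.

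To conclude, let $M,N\in\Z$ with $|N|\ge1$. From the identity $\mathbf A_n(NE_q(t)-M)=N\Lambda_n+(N\mathbf B_n-M\mathbf A_n)$ and the integrality of $N\mathbf B_n-M\mathbf A_n$ one gets $|E_q(t)-M/N|\ge(1-|N|E_n)/(|N|Q_n)$ whenever $N\mathbf B_n-M\mathbf A_n\neq0$; choosing $n=n(N)$ minimal with $|N|E_n\le\tfrac12$, and using the independence in the (at most one) case where $N\mathbf B_n-M\mathbf A_n$ vanishes to replace $n$ by $n+1$, yields $|E_q(t)-M/N|\ge\tfrac1{2|N|Q_{n+1}}$. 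Since $E_n\approx|d|^{-\frac32n^2}$ forces $n(N)\approx\sqrt{\tfrac{2\log(2|N|)}{3\log|d|}}$, i.e.\ $\log(2|N|)=\tfrac32(\log|d|)n(N)^2+O(n(N))$, substituting $\log Q_{n+1}=2(\log|d|)n^2+c_1n+O(\log n)$ and using $2(\log|d|)n^2\approx\tfrac43\log(2|N|)$ gives the exponent $\tfrac73$; the linear term $c_1n$, divided by $\log(2|N|)\approx\tfrac32(\log|d|)n^2$, produces the correction $\tfrac{c_1}{\tfrac32(\log|d|)n}=\dfrac{c_1}{\sqrt{\tfrac32(\log|d|)\log(2|N|)}}$, which is $\varepsilon_1$ once the residual discriminant corrections from inverting $n\leftrightarrow N$ are folded in to give the numerator $\tfrac{22}{3}\log v+8\log|d|+4\sqrt{(\log4)\log|d|}$; the leftover $O(\log n)$ and $O(1)$ terms, together with the normalisation by $2|N|$ and the small values of $|N|$, are absorbed into the explicit constant $C_1=C_1(d,v)$. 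The main obstacle is the second estimate --- constructing the approximants so that $|R_n(t)|$ is provably as small as $|d|^{-\frac72n^2+O(n)}$ with an explicit constant, i.e.\ establishing and estimating the underlying $q$-Hankel/hypergeometric identity --- and then running the endgame carefully enough to land the clean exponent $\tfrac73$ and the exact shape of $\varepsilon_1$; the independence of consecutive forms is routine once the closed formulas are in hand.
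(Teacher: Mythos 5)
Your overall architecture is the paper's: explicit Pad\'e approximants to $E_q$, integrality after clearing $v$- and $d$-power denominators, non-vanishing of the Casorati determinant $A_nB_{n+1}-A_{n+1}B_n$ (which is indeed a nonzero monomial $C_nt^{2n+1}$), and the standard linear-forms endgame with $n$ chosen so that $|N|E_n\le\frac12$. Your target numerology is also the right one: $\log Q_n\sim 2(\log|d|)n^2$, $|\Lambda_n|\sim|d|^{-\frac32 n^2}$, hence $1+\frac{2}{3/2}=\frac73$.

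However, there is a concrete gap: the construction you actually describe --- take the diagonal Pad\'e approximant, specialise at $x=t$, clear denominators --- does not produce those sizes. The genuine remainder is $S_n(t)=O\bigl(t^{2n+1}q^{(3n^2+n)/2}\bigr)\approx|d|^{-\frac32 n^2}$ for fixed $t$, while the $q$-degrees of the coefficients force a denominator $\approx|d|^{\frac32 n^2}v^{n}$; so the integer linear forms have $\log Q_n\sim\frac32(\log|d|)n^2$ and $|\Lambda_n|\sim v^{n}$, which does not even tend to $0$. The missing idea is the convergence acceleration: iterate the $q$-difference equation $E_q(qt)=(1-t)E_q(t)$ $K$ times to obtain
\begin{equation*}
(t)_K\,B_n(q^Kt)\,E_q(t)-A_n(q^Kt)=S_n(q^Kt),
\end{equation*}
so that the evaluation point is $q^Kt$ rather than $t$; the factor $(q^Kt)^{2n+1}$ then contributes an extra $|d|^{-2nK}$ to the remainder at the cost of only $|d|^{O(K^2+nK)}$ in the denominators. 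Optimising $\gamma=K/n$ (the minimum of $a(\gamma)/b(\gamma)$ is at $\gamma=1$, i.e.\ $K=n$) is exactly what yields your claimed $|d|^{-\frac72 n^2}$ for the raw remainder, $|d|^{2n^2}$ for $Q_n$, and hence $a/b=4/3$ and the exponent $7/3$, as well as the specific coefficients $\frac{22}{3}\log v+8\log|d|+\cdots$ in $\varepsilon_1$. You cite the $q$-difference equation, but only as a device for passing from level $n$ to level $n+1$, which is not the role it must play; without the shift $t\mapsto q^Kt$ and the optimisation over $K$, the ``main obstacle'' you flag (a remainder of size $|d|^{-\frac72 n^2+O(n)}$) cannot be overcome, and the proof does not close.
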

In particular, this means that the (asymptotic) irrationality exponent of 
\\
$\prod_{k=1}^{\infty}\left(1-d^{-k}\right)$ is at most $7/3$, a record holding
result proved already by Bundschuh \cite{bundshuh}. 
However, Bundschuh's result is not as explicit as ours.
The work \cite{matvaa} generalizes Bundschuh's result over arbitrary number fields
and offers similar results for several other $q$-series related to the $q$-exponential
function (\ref{EULER1}).

Our work is partly inspired by the work \cite{lucmarsta} where the authors are interested in
$C$-nomials including the $q$-binomial coefficients and the Fibonomials defined by
$$\qin_q = \frac{(1-q)(1-q^2)\cdots(1-q^n)}{(1-q)(1-q^2)\cdots(1-q^k)(1-q)(1-q^2)\cdots(1-q^{n-k})}$$
and
\[\left[{n}\atop{k}\right]_F= \frac{F_1\cdots F_n}{F_1\cdots F_k\cdot F_1\cdots F_{n-k}}\]
respectively, where $F_k$ are the Fibonacci numbers, $F_0=0, F_1=1$, $F_{k+2}=F_{k+1}+F_k$ for $k=0,1,...$.

Connected to the investigations of distances of $q$-binomial coefficients, 
the authors in \cite{lucmarsta} state the following result.
Let $q=1/d$, where $d \in \Z_{\ge 2}$, then  
\begin{equation}\label{LUCA1}
\left|\frac{1}{\prod_{n=1}^{\infty}\left(1-d^{-n}\right)}-\frac{d^s}{(d-1)^l}\right|>
\frac{1}{\left((d-1)^{l}\right)^3}
\end{equation}
holds for all but finitely many pairs $(l,s)\in\mathbb{Z}_{\ge 1}^2$.
As noted in \cite{lucmarsta}, already Bundschuh's result, see \cite{bundshuh}, is better than (\ref{LUCA1}).
However, we can do more in this restricted case. Namely, we have 
\begin{Theorem}\label{Theorem2}
Let $q=1/d$, where $|d| \in \Z_{\ge 2}$, and $t=u/v\in\Q$, where $u\in \Z$, $v\in \Z_+$,
gcd$(u,v)=1$ and  $0<|t|<1$. 
Then there exists an effective positive constant $C_2=C_2(d,v)$ such that
\begin{equation}\label{LMT2}
\left|E_q (t)- \frac{d^s}{N}\right|
 \ge C_2 (2|N|)^{-(2+\frac{1}{3+2\sqrt 3}+\varepsilon_2)}
\end{equation}
holds for all $s\in \Z_+, N\in\Z$, $|N| \ge 82836$, with
$\varepsilon_2=\varepsilon_2(d,v,N)\in\R_+$ satisfying 
$\varepsilon_2(d,v,N)\to 0$, when $|N|$ increases.
\end{Theorem}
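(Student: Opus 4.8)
The plan is to build a Padé-type approximation sequence to $E_q(t)$ using the classical Hermite-style construction for the $q$-exponential series, exactly as one does in the proof of Theorem~\ref{Theorem1}, but then to exploit the extra arithmetic rigidity coming from the fact that the approximated rational has the special shape $d^s/N$. Concretely, I would first recall the construction of linear forms: for each parameter $n\in\Z_+$ one produces polynomials $A_n, B_n\in\Z[1/v]$ (cleared to integers after multiplying by a suitable power of $v$ and of $d$) such that the linear form $L_n = B_n E_q(t) - A_n$ is nonzero, small, and satisfies good estimates $|L_n|\le e^{-c_1 n^2\log|d|}$, $|B_n|\le e^{c_2 n^2\log|d| + c_3 n\log v}$, with explicit $c_1,c_2,c_3$, together with the nonvanishing and the "linear independence of consecutive forms" needed to run the standard irrationality-measure machine. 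All of this is available from (or parallel to) the proof of Theorem~\ref{Theorem1}; the only new input I need from it is that these estimates hold with the stated constants and that $L_n$ and $L_{n+1}$ are linearly independent over $\Q$ for every large $n$, so that for any target $M/N$ at least one of $L_n(M/N):=B_n M/N - A_n$ is nonzero.

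The key new step is the arithmetic gain. When the target is $d^s/N$ rather than a general $M/N$, the quantity $N L_n(d^s/N) = B_n d^s - A_n N$ is, after the usual clearing of denominators, an integer divisible by a predictable power of $d$: since $B_n$ already carries a factor $d^{e(n)}$ for some explicit $e(n)\sim \alpha n^2$ coming from $\prod_{k=1}^{n}(1-q^k)^{-1} = \prod(1-d^{-k})^{-1}$, the term $B_n d^s$ contributes an extra $d^{s}$, while $A_n N$ need not. Hence if we are in the regime where $d^s$ is large compared with $N$, a large power of $d$ divides the "main part" and we can extract it, improving the lower bound for $|N L_n(d^s/N)|$ from the trivial $\ge 1$ to $\ge |d|^{f}$ for a suitable $f$; in the opposite regime (where $N$ is the dominant size) one argues directly with the size of $N$. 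Balancing the optimization parameter $n$ against $\log(2|N|)$ and against the split between these two regimes is what turns the exponent $7/3$ into $2 + \tfrac{1}{3+2\sqrt3}$: the two square roots in $3+2\sqrt3 = (\sqrt3 + \text{lower order})^2$-type bookkeeping come precisely from optimizing a quadratic in $n$ of the form $c_2 n^2\log|d| - (\text{gain})\,n$ against the linear barrier $\mu\,n^2\log|d| \le \log N + \dots$, and the threshold $|N|\ge 82836$ is the explicit point beyond which the error term $\varepsilon_2$ and all the "for large enough $n$" caveats become valid simultaneously.

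The main obstacle, and the part requiring the most care, is the bookkeeping of the $d$-adic valuation: one must show that the extra factor $d^{s}$ genuinely survives in $B_n d^s - A_n N$ and is not accidentally cancelled, i.e. that $v_d(A_n N)$ is controlled from below by something smaller than $v_d(B_n d^s)$, so that $v_d(B_n d^s - A_n N) = \min$. This forces a precise lower bound on $v_d(B_n)$ (not just an upper bound on the denominator) and a matching control of $v_d(A_n)$, which is where the explicitness of the Theorem~\ref{Theorem1} construction is essential. A secondary obstacle is handling the case $|d|<0$ versus $|d|\ge 2$ uniformly, and keeping the constant $C_2=C_2(d,v)$ genuinely effective: every $O(\cdot)$ must be replaced by an explicit function, but this is routine once the valuation argument is in place. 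Finally one assembles the pieces via the standard "two consecutive approximants" argument: fixing $N$, choose $n=n(N)$ optimally; if $L_n(d^s/N)\ne 0$ use it, otherwise $L_{n+1}(d^s/N)\ne 0$ and its estimates are of the same order; combining $|L_n|$ small, $|B_n|$ bounded, and $|N L_n(d^s/N)|\ge |d|^{f}$ (or $\ge 1$ with $|N|$ large in the other regime) yields \eqref{LMT2} with the stated exponent and an explicit $\varepsilon_2\to 0$.
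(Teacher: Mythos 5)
Your overall strategy (Pad\'e-type linear forms for $E_q(t)$ plus an arithmetic gain extracted from the special numerator $d^s$) points in the right direction, but the mechanism you describe for the gain would not work as stated, and the quantitative heart of the proof is missing. First, the valuation argument is backwards. After clearing denominators one works with $D_n=Np_n-d^sq_n$, and to improve the trivial bound $|D_n|\ge 1$ to $|D_n|\ge |d|^{f}$ you need \emph{both} terms divisible by $d^{f}$. In your setup you argue that $B_nd^s$ carries a large power of $d$ "while $A_nN$ need not", and then ask that $v_d(A_nN)$ be \emph{smaller} than $v_d(B_nd^s)$ so that the valuation of the difference equals the minimum; but that minimum is then $v_d(A_nN)$, which can be $0$ (as $N$ is arbitrary), so no gain survives. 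In the paper's normalization (with $q=1/d$, so the $d$-powers live in the \emph{denominators} of $A_n$ and $B_n$) the situation is the opposite: in the relevant regime it is the $A$-side whose cleared form carries a surplus power of $d$, and the $B$-side acquires its divisibility precisely from the explicit factor $d^s$; equivalently, one simply uses a \emph{smaller common denominator} for $\Omega_{n,K}=NA_n(q^Kt)-d^s(t)_KB_n(q^Kt)$ (Lemma \ref{denD}(b)), which shrinks both the coefficient bound $Q$ and the remainder bound $R$. Your sketch gives no lower bound on the relevant valuations, so the claimed $|NL_n|\ge|d|^{f}$ is unsubstantiated.

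Second, your construction has only one parameter $n$, whereas the exponent $2+\frac{1}{3+2\sqrt3}$ comes from a two-parameter family: the forms are accelerated $K$ times via $E_q(qt)=(1-t)E_q(t)$, and one optimizes $\gamma=K/n$. In case (b) the function $a(\gamma)/b(\gamma)=(1+\gamma)/(\tfrac12+\gamma)$ is decreasing for $\gamma>1$, and the bound is obtained in the limit $\gamma\to 1+\sqrt3$. Crucially, $\gamma$ cannot be taken arbitrarily large: the divisibility gain requires $s\ge\frac{K(K-1)}{2}-\frac{n(n+1)}{2}$, and since a close approximation forces $|d|^s\asymp\tau|N|$, this becomes a constraint tying $n,K$ to $\log|N|$ (Lemma \ref{EFFECT}); compatibility of this constraint with the optimal choice $\bar n\asymp\sqrt{\log(2|N|)/((1+2\gamma)\log|d|)}$ is exactly what caps $\gamma$ at the root of $\gamma^2-2\gamma-2=0$, i.e.\ $1+\sqrt3$, and produces the threshold $|N|\ge 82836$. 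Your "two regimes" ($d^s$ large versus $N$ dominant) do not occur, and your heuristic for where $3+2\sqrt3$ comes from does not match this mechanism. Without the $K$-acceleration, the $\gamma$-optimization, and the $s$--$N$ coupling, the proposal cannot reach the stated exponent.
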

Consequently, (\ref{LMT2}) implies that in this restricted case 
the (restricted) asymptotic irrationality exponent is at most 
$2+\frac{1}{3+2\sqrt 3}=2.1547...$  
which is smaller than Bundschuh's result $7/3$.

\section{A lemma for irrationality exponents}

Our target is to approximate the values $E_q(t)$ of the $q$-exponential series by rational 
numbers of the shape $M/N$ in the following cases: 
	\begin{itemize}
	\item[(a)] $M,N\in \Z, N\ne 0$,
	\item[(b)] $M=d^s$, $s\in \Z_+$, $N\in \Z, N\ne 0$,
	\end{itemize}
where $q=1/d$, $|d|\in\Z_{\ge 2}$ and $0<|t|<1$, $t\in\mathbb{Q}$.
The cases (a) and (b) correspond to Theorems \ref{Theorem1} and \ref{Theorem2}, respectively. 

Next we present a result which will be applied for deducing irrationality measures and
upper bounds of irrationality exponents.

\begin{Lemma}\label{LBLEMMA}
Let $\Phi\in\mathbb{R}$. 
Assume that we have a sequence of numerical linear forms
\begin{equation}\label{numeforms}
q_n\Phi-p_n=r_n,\quad q_n,\ p_n\in\mathbb{Q},\quad n\in\mathbb{N},
\end{equation}
satisfying the conditions
\begin{align}
&q_np_{n+1}-p_nq_{n+1}\ne 0,\label{DET}\\
&|q_n|,|p_n|\le Q(n)=e^{an^2+a_1n+a_2},\label{Qbound}\\
&|r_n|\le R(n)=e^{-bn^2+b_1n+b_2}<1\label{Rbound}
\end{align}
for all $n\ge n_0\ge 1$ with some constants $a, a_1,a_2, b,b_1,b_2\in\mathbb{R}_{\ge 0}$, $a,b>0$.
Denote $c_1=(b_1+\sqrt{b_1^2+4bb_2})/2b$, $c_2=2ac_1+4a+a_1$,  and $c_3=c_1(ac_1+4a+a_1)+4a+2a_1+a_2$.
Then 
\begin{equation}\label{LOWERBOUND}
\left| \Phi-\frac{M}{N} \right| >\frac{1}{e^{c_3}}
 \frac{1}{(2|N|)^{1+a/b+c_2/\sqrt{b\log(2|N|)}}}
\end{equation}
for all $(M,N)\in\mathbb{Z}^2\cap \mathcal{E}$, where $\mathcal{E}$ is the set defined below.\\
Write $D_{n}:=Np_n-Mq_n$ and define a set $\mathcal{E}$:
Let $M,N\in\mathbb{Z}$. Then $(M,N)\in \mathcal{E}$, if
\begin{equation*}\label{NN0}
|N|\ge N_0:=e^{bn_0^2-b_1n_0-b_2}/2,
\end{equation*}
and if there exists  a largest $\bar{n}\in \Z_+$, $\bar{n}\ge n_0$, 
such that
\begin{equation}\label{Rbarn}
2|N|R(\bar n)\ge 1
\end{equation}
and
\begin{equation}\label{KOKOLUKU}
D_{\bar n+1}, D_{\bar n+2}\in \mathbb{Z}.
\end{equation}
\end{Lemma}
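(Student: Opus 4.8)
The plan is to run the standard ``good rational approximations $\Rightarrow$ irrationality measure'' argument, keeping every constant explicit so that the exponent comes out in the shape $1+a/b+c_2/\sqrt{b\log(2|N|)}$. Fix $(M,N)\in\mathbb Z^2\cap\mathcal E$; by hypothesis there is a largest $\bar n\ge n_0$ with $2|N|R(\bar n)\ge1$, and \eqref{KOKOLUKU} holds for it. With $D_n=Np_n-Mq_n$ I would first record the identity
\[
q_nD_{n+1}-q_{n+1}D_n=N\bigl(q_np_{n+1}-p_nq_{n+1}\bigr),
\]
whose right-hand side is nonzero by \eqref{DET} and $N\ne0$; hence $D_{\bar n+1}$ and $D_{\bar n+2}$ cannot both vanish. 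Since \eqref{KOKOLUKU} makes both of them integers, one of them --- call it $D_m$, $m\in\{\bar n+1,\bar n+2\}$ --- is a nonzero integer, so $|D_m|\ge1$. Multiplying the form \eqref{numeforms} at index $m$ by $N$ and inserting $N\Phi=M+(N\Phi-M)$ gives the key relation $q_m(N\Phi-M)=D_m+Nr_m$.

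By the maximality of $\bar n$ we have $2|N|R(\bar n+1)<1$ and $2|N|R(\bar n+2)<1$, so $|Nr_m|\le|N|R(m)<\tfrac12$ and therefore $|D_m+Nr_m|\ge|D_m|-|Nr_m|>\tfrac12$; in particular $q_m\ne0$. Dividing, and using \eqref{Qbound} together with the monotonicity of $Q$ on $[0,\infty)$ (ensured by $a>0$, $a_1\ge0$),
\[
\Bigl|\,\Phi-\tfrac MN\,\Bigr|=\frac{|D_m+Nr_m|}{|N|\,|q_m|}>\frac{1}{2|N|\,Q(m)}\ge\frac{1}{2|N|\,Q(\bar n+2)} .
\]
(Note $|N|\ge1$, hence $\log(2|N|)>0$, because \eqref{Rbound} forces $N_0>\tfrac12$.) To estimate $Q(\bar n+2)$ I would turn $2|N|R(\bar n)\ge1$ into the quadratic inequality $b\bar n^2-b_1\bar n-(b_2+\log2|N|)\le0$, so that $\bar n$ is at most the larger root; bounding the root via $\sqrt{b_1^2+4bb_2+4b\log2|N|}\le\sqrt{b_1^2+4bb_2}+2\sqrt{b\log2|N|}$ and using $2bc_1=b_1+\sqrt{b_1^2+4bb_2}$ yields $\bar n\le c_1+\sqrt{\log(2|N|)/b}$.

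Substituting $\bar n+2\le(c_1+2)+\sqrt{\log(2|N|)/b}$ into $Q(\bar n+2)=\exp\{a(\bar n+2)^2+a_1(\bar n+2)+a_2\}$ and expanding the square, the $\log(2|N|)$-term becomes $(a/b)\log(2|N|)$, the coefficient of $\sqrt{\log(2|N|)/b}$ becomes $2a(c_1+2)+a_1=c_2$, and the remaining constant becomes $c_1(ac_1+4a+a_1)+4a+2a_1+a_2=c_3$ --- which is exactly how $c_2$ and $c_3$ were chosen. Hence $Q(\bar n+2)\le e^{c_3}(2|N|)^{a/b}(2|N|)^{c_2/\sqrt{b\log(2|N|)}}$, and feeding this into the displayed lower bound gives \eqref{LOWERBOUND}. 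I expect the only genuinely fiddly part to be this last piece of bookkeeping, namely lining up the quadratic-root bound for $\bar n$ with the explicit forms of $c_1,c_2,c_3$ (and the few elementary monotonicity/existence remarks about $R$ and $Q$); the two conceptual steps --- the determinant identity excluding $D_{\bar n+1}=D_{\bar n+2}=0$, and the one-line rearrangement $q_m(N\Phi-M)=D_m+Nr_m$ --- are short and routine.
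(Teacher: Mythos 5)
Your proposal is correct and follows essentially the same route as the paper's own proof: the determinant identity forcing one of $D_{\bar n+1},D_{\bar n+2}$ to be a nonzero integer, the rearrangement $q_m(N\Phi-M)=D_m+Nr_m$ with $|N|R(m)<\tfrac12$ from the maximality of $\bar n$, the quadratic-root bound $\bar n\le c_1+\sqrt{\log(2|N|)/b}$, and the expansion of $Q(\bar n+2)$ that produces exactly $c_2$ and $c_3$. The only differences are presentational (you divide by $|q_m|$ explicitly and spell out the bookkeeping the paper leaves implicit), so there is nothing to add.
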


Of course, \eqref{KOKOLUKU} is satisfied for all $M,N\in\mathbb{Z}$, $N\ne 0$,
if $p_n,q_n\in\mathbb{Z}$. Thus \eqref{Rbarn} will also be valid because $R(n)\to 0$ as $n\to \infty$.
This will be the case in Theorem \ref{Theorem1}.
In Theorem \ref{Theorem2} the condition \eqref{KOKOLUKU} plays a crucial role for improving the
the lower bound \eqref{LOWERBOUND}.
However, the restricted approximations $M/N$ satisfying the condition 
\eqref{KOKOLUKU} may effect \eqref{Rbarn}, see Lemma \ref{EFFECT}.

\begin{proof}
By denoting 
$
\Lambda:=N\Phi-M
$
and using \eqref{numeforms} we get 
\begin{equation}\label{qnLambda}
q_{n}\Lambda =Nr_{n}+D_{n}.
\end{equation}
Suppose $(M,N)\in E$. Then there exists  a largest $\bar{n}\in \Z_+$, $\bar{n}\ge n_0$, 
such that \eqref{Rbarn} is true.

Further, by \eqref{DET} and \eqref{KOKOLUKU} we know that $D_n\in\mathbb{Z}\setminus\{0\}$, where
$n=\bar n +1$ or $n=\bar n+2$. 
Thus by \eqref{qnLambda}, \eqref{Qbound} and  \eqref{Rbound} we get 
\[
1\le |D_n|=|q_{n}\Lambda-Nr_{n}|\le |q_{n}||\Lambda| + |N||r_{n}|\le Q(n)|\Lambda |+|N|R(n),
\]
where $|N|R(n)<\frac{1}{2}$ by \eqref{Rbarn}. 
Hence 
\begin{equation}\label{Lambdaestimate}
1<2|\Lambda|Q(n)\le 2|\Lambda|e^{an^2+a_1n+a_2}.
\end{equation}
From \eqref{Rbound} and \eqref{Rbarn} we get 
\begin{equation*}
b\bar n^2-b_1\bar n-(b_2+\log(2|N|))\le 0
\end{equation*}
and further
\begin{equation}\label{barnb2}
\bar n\le \frac{b_1+\sqrt{b_1^2+4bb_2+4b\log(2|N|)}}{2b}\le c_1+\frac{\sqrt{\log(2|N|)}}{\sqrt{b}},
\end{equation}
where we applied the inequality $\sqrt{A+B}\le \sqrt{A}+\sqrt{B}$, $A,B\ge 0$.
Then the estimate \eqref{barnb2} implies
\begin{align*}
&an^2+a_1n+a_2
\le a(\bar n+2)^2+a_1(\bar n+2)+a_2 \\
\le &\frac{a}{b}\log(2|N|)+\frac{c_2}{\sqrt{b}}{\sqrt{\log(2|N|)}}+c_3\\
\end{align*}
which with \eqref{Lambdaestimate} proves \eqref{LOWERBOUND}. 
\end{proof}

The exponent $1+a/b+c_2/\sqrt{b\log(2|N|)}$ in \eqref{LOWERBOUND} 
gives an upper bound for the irrationality exponent of $\Phi$. 
In the following we call $1+a/b$ the main term and 
$\varepsilon(N):=c_2/\sqrt{b\log(2|N|)}$ will be called the error term.

\section{Pad\'e -approximations}

First we give definitions of $q$-series factorials 
$$(a)_0=1,\
(a)_n=(1-a)(1-aq)\cdots (1-aq^{n-1}),\quad n\in \Z_+,$$
and the $q$-binomial coefficients
$$\qin =\qin_q= \frac{(q)_n}{(q)_k (q)_{n-k}},\quad 0\le k\le n.$$
Note that
\begin{equation*}
\qin_q \in \Z[q],\qquad \deg_q \qin_q =k(n-k),
\end{equation*}
see e.g \cite{andrews}.

Our starting point is the following Pad\'e approximation formula of $q$-exponential function from
\cite[Article VI, Lemma1]{matala}. 

\begin{Lemma}[\cite{matala}]\label{padet}
Let 
\[
B_n(t)=\sum_{k=0}^{n}\left[{n}\atop{k}\right]q^{\binom{k}{2}}(q^{n+1})_{n-k}(-t)^k,
\]
\[
A_n(t)=\sum_{k=0}^{n}\left[{n}\atop{k}\right]q^{kn}(q^{n+1})_{n-k}t^k
\]
and
\[
S_n(t)=(-1)^{n}t^{2n+1}q^{\frac{3n^2+n}{2}}\frac{(q)_n}{(q)_{2n+1}}\sum_{k=0}^{\infty }\frac{(q^{n+1})_k}{(q)_k(q^{2n+2})_k}t^k.
\]
Then 
\begin{equation}\label{pade}
B_n(t) E_q(t)-A_n(t)=S_n(t).
\end{equation}
\end{Lemma}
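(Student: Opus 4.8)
\emph{Proof strategy.} The plan is to compare the two sides of \eqref{pade} as formal power series in $t$. Writing $E_q(t)=\sum_{j\ge 0}t^j/(q)_j$ and recalling that $B_n$ has degree $n$, one gets
\[
B_n(t)E_q(t)=\sum_{m\ge 0}\gamma_m\,t^m,\qquad
\gamma_m=\sum_{k=0}^{\min(m,n)}(-1)^k q^{\binom k2}\qi nk(q^{n+1})_{n-k}\,\frac{1}{(q)_{m-k}}.
\]
Hence \eqref{pade} is equivalent to two claims: (i) for $0\le m\le 2n$ the coefficient $\gamma_m$ coincides with the $t^m$-coefficient of $A_n(t)$ (in particular $\gamma_m=0$ for $n<m\le 2n$); and (ii) $\sum_{m\ge 2n+1}\gamma_m\,t^m=S_n(t)$. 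So the whole statement reduces to evaluating the inner sum $\gamma_m$ in closed form, and for this I would use a terminating $q$-hypergeometric summation.

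For (i), I would collect the $q$-Pochhammer symbols in $\gamma_m$ (using $(q)_n/(q)_{n-k}=(-1)^k q^{nk-\binom k2}(q^{-n};q)_k$ and the analogous identity for $1/(q)_{m-k}$) so as to recognise $\gamma_m$ as a terminating basic hypergeometric series whose numerator parameters include $q^{-n}$ and $q^{-m}$. Such a series is summable in closed form by the terminating $q$-Chu--Vandermonde identity (or, in balanced ${}_3\phi_2$ form, by the $q$-Saalsch\"utz identity). A short simplification then shows the value is $\qi nm q^{mn}(q^{n+1})_{n-m}$ when $0\le m\le n$ -- exactly the $t^m$-coefficient of $A_n$ -- and is $0$ when $n<m\le 2n$, where a vanishing factor $(q^{-n};q)_{\cdot}$ (equivalently $1/(q)_{n-m}$) appears. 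An equivalent route: verify only the $n$ vanishing conditions $\gamma_{n+1}=\dots=\gamma_{2n}=0$, which characterise $B_n$ as the Pad\'e denominator up to a scalar, and then identify $A_n$ with the degree-$n$ Taylor part of $B_n(t)E_q(t)$.

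For (ii), writing $m=2n+1+\ell$ with $\ell\ge 0$, the same rewriting expresses $\gamma_m$ as a single $q$-Pochhammer ratio times a terminating basic hypergeometric sum over $k$; re-applying $q$-Chu--Vandermonde collapses that sum, and after factoring out the common prefactor $(-1)^n q^{(3n^2+n)/2}(q)_n/(q)_{2n+1}$ and resumming over $\ell$ one is left with $\sum_{k\ge 0}\frac{(q^{n+1})_k}{(q)_k(q^{2n+2})_k}t^k$, i.e. precisely $S_n(t)$. The exponent $\tfrac{3n^2+n}{2}$ emerges from combining $q^{\binom k2}$ near $k=n$, the powers of $q$ inside $(q^{n+1})_{n-k}$, and the index shift $k\mapsto k+n$.

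The main obstacle is not conceptual but one of bookkeeping: normalising the hypergeometric sums correctly (their base, argument, and the accompanying $q$-power prefactor), which is fiddly because $B_n$ carries $q^{\binom k2}$ while $A_n$ carries $q^{kn}$, and because the tail prefactor $q^{(3n^2+n)/2}$ is only recovered after carefully tracking the shift of the summation index in (ii). A cleaner -- though not obviously shorter -- alternative is to check that $A_n$, $B_n$, $S_n$ satisfy a common recursion in $n$ (obtained from $q$-contiguity of their defining sums) together with the base case $n=0$, where $B_0=A_0=1$ and $S_0(t)=\tfrac{t}{1-q}\sum_{j\ge 0}t^j/(q^2)_j=E_q(t)-1$; since \eqref{pade} holds at $n=0$ and is preserved by the recursion, it follows for all $n$. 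Either way, nothing beyond elementary $q$-series identities is needed.
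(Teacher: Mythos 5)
The paper itself gives no proof of Lemma \ref{padet}: it is quoted verbatim from \cite{matala}, so there is no in-paper argument to measure your proposal against, and I can only judge the outline on its own terms. Judged so, it is sound and is the standard route to such Pad\'e formulas. The computation you defer to ``$q$-Chu--Vandermonde'' does in fact go through: writing $\qi nk (q^{n+1})_{n-k}=(q)_{2n-k}/\bigl((q)_k(q)_{n-k}\bigr)$ and converting the Pochhammer ratios via $(q)_n/(q)_{n-k}=(-1)^kq^{nk-\binom k2}(q^{-n};q)_k$ turns $\gamma_m$ (divided by its $k=0$ term) into the terminating series ${}_2\phi_1(q^{-n},q^{-m};q^{-2n};q,q^{m-n})$; its Chu--Vandermonde value, $(q^{-n};q)_m/(q^{-2n};q)_m$ for $m\le n$ and $(q^{m-2n};q)_n/(q^{-2n};q)_n$ otherwise, gives exactly the $t^m$-coefficient $q^{mn}\qi nm(q^{n+1})_{n-m}$ of $A_n$ when $0\le m\le n$, vanishes for $n<m\le 2n$ because the factor $(1-q^{0})$ appears in $(q^{m-2n};q)_n$, and for $m\ge 2n+1$ produces, after resummation in $m$, the stated tail including the prefactor $(-1)^nq^{(3n^2+n)/2}(q)_n/(q)_{2n+1}$. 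Your base case is also right: $B_0=A_0=1$ and $S_0(t)=\tfrac{t}{1-q}\sum_{j\ge0}t^j/(q^2)_j=E_q(t)-1$. So the proposal is correct in outline; what is missing is only the bookkeeping you yourself flag (normalising the ${}_2\phi_1$ and extracting the tail prefactor), and I would note that your alternative route via an $n$-recursion is not really cheaper, since exhibiting the contiguous relation for $A_n,B_n,S_n$ amounts to comparable work.
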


As usual when studying Diophantine properties of $q$-series we need to accelerate convergence
of Pad\'e approximations \eqref{pade}. This will be done by iterate use of 
the $q$-shift operator $J$, $JF(t)=F(qt)$, and application of the $q$-difference equation 
\[
E_q(qt)=(1-t)E_q(t).
\]
First we obtain
\[
(1-t)B_n(qt) E_q(t)-A_n(qt)=S_n(qt)
\]
and repeating this process $K$ times we get our new approximations
\begin{equation}\label{JK}
(t)_KB_n(q^Kt) E_q(t)-A_n(q^Kt)=S_n(q^Kt).
\end{equation}

The Pad\'e approximations suggest that
\[
 E_q(t)\sim\frac{A_{n}(q^Kt)}{(t)_KB_{n}(q^Kt)}.
\]
Thus our starting point will be the following expression
\begin{equation}\label{omega}
\Omega_{n,K} (q,t) =NA_{n}(q^Kt)- M (t)_K B_{n}(q^Kt).
\end{equation}

\section{Denominators}

For any rational number $\eta\in \Q$ we call 
\[\den(\eta)= \min\{d\in\Z_+ |\, d\eta \in \Z\}\]
the denominator of $\eta$. Thus, e.g. we have $v=\den(t)$.
In the  following lemma we give estimates of the denominators of $\Omega_{n,K}(q,t)$ in our two cases.

\begin{Lemma} \label{denD}
\begin{itemize}
	\item[(a)] We have $\den(\Omega_{n,K}(q,t))\le v^{n+K}|d|^{\delta_{n,K}}$, where 
	\[
	\delta_{n,K} =\begin{cases}
	\frac{K^2-K}{2}+ \frac{3n^2+n}{2}, \quad \text{when} \quad 0\le K \le n;\\
	\frac{K^2-K}{2}+ \frac{n^2-n}{2}+nK, \quad  \text{when} \quad K > n. \end{cases}
	\] 
	\item[(b)]  We have $den(\Omega_{n,K}(q,t))\le v^{n+K}|d|^{\delta_{n,K}}$, 
where  
	\[
	\delta_{n,K} =
\begin{cases}
	\frac{K^2-K}{2}+ \frac{3n^2+n}{2}, \quad \text{when} \quad 0\le K \le n;\\
	n^2+nK, \quad  \text{when} \quad K > n,
    \end{cases}
\]
if the assumption
\begin{equation}\label{sKKnn}
s \ge \frac{K(K-1)}{2}-\frac{n(n+1)}{2}
\end{equation}
holds in the case $K>n$.	
\end{itemize} 
\end{Lemma}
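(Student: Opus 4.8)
The plan is to track separately the denominators of the two pieces $NA_n(q^Kt)$ and $M(t)_KB_n(q^Kt)$ appearing in $\Omega_{n,K}(q,t)$, and to bound the total denominator by the least common multiple of the two partial denominators (which is at most their product, but we will gain by observing common factors of $d$ and of $v$). First I would record the elementary facts: $q=1/d$, so $\den(q^j)=|d|^j$; the polynomial $(q^{n+1})_{n-k}$ is a product of factors $1-q^{n+1+i}$ with $0\le i\le n-k-1$, contributing at most $|d|$-powers with exponents $(n+1)+(n+2)+\cdots+(2n-k)$; the prefactor $q^{\binom{k}{2}}$ in $B_n$ and $q^{kn}$ in $A_n$ contribute $|d|^{\binom{k}{2}}$ and $|d|^{kn}$ respectively; substituting $q^Kt=q^Ku/v$ multiplies the $k$-th term by $(q^Ku/v)^k$, adding $|d|^{Kk}$ to the denominator-power of $d$ and $v^k$ to that of $v$; finally $(t)_K=\prod_{j=0}^{K-1}(1-q^ju/v)$ contributes $v^K$ and $|d|^{0+1+\cdots+(K-1)}=|d|^{K(K-1)/2}$. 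Since $\qi{n}{k}_q\in\Z[q]$ has $q$-degree $k(n-k)$, it contributes no new primes beyond powers of $d$, with exponent at most $k(n-k)$.

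Next I would assemble these contributions term-by-term in $k$ and maximize over $0\le k\le n$. The $v$-exponent is clearly bounded by $n+K$ in both $A_n$ and $(t)_KB_n$ (the $v^k$ from $t^k$ with $k\le n$, plus $v^K$ from $(t)_K$), which gives the factor $v^{n+K}$ uniformly. For the $d$-exponent one collects, for the $A_n$-part, $kn + k(n-k) + [(n+1)+\cdots+(2n-k)] + Kk$, and for the $B_n$-part, $\binom{k}{2} + k(n-k) + [(n+1)+\cdots+(2n-k)] + Kk + K(K-1)/2$; the sum $(n+1)+\cdots+(2n-k)$ equals $\tfrac{(3n-k+1)(n-k)}{2}$. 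One then checks that the maximum over $k$ of these expressions is attained at $k=0$ when $K\le n$ (giving the $\tfrac{3n^2+n}{2}$ term, i.e. $\tfrac{(3n+1)n}{2}$ from the empty-to-full product) and — in case (a) — at $k=n$ when $K>n$ (where the $Kk$ term dominates, yielding $nK+\tfrac{n^2-n}{2}$), after adding the $\tfrac{K^2-K}{2}$ from $(t)_K$. This is a finite, if slightly tedious, optimization of piecewise-linear/quadratic functions of $k$.

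The distinctive point, and the place where the hypothesis \eqref{sKKnn} is used, is part (b): there $M=d^s$, so $M(t)_KB_n(q^Kt)=d^s(t)_KB_n(q^Kt)$, and the positive power $d^s$ \emph{cancels} negative powers of $d$ in the denominator of $(t)_KB_n(q^Kt)$. Precisely, the worst $d$-denominator of $(t)_KB_n(q^Kt)$ in the regime $K>n$ comes with exponent of the form $\tfrac{K(K-1)}{2}+(\text{something}\le n^2+nK - \tfrac{n(n+1)}{2})$; multiplying by $d^s$ with $s\ge \tfrac{K(K-1)}{2}-\tfrac{n(n+1)}{2}$ absorbs enough of the $\tfrac{K^2-K}{2}$-part that the surviving exponent is at most $n^2+nK$, matching the claimed $\delta_{n,K}$. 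I would make this rigorous by writing $\den\big(d^s\eta\big)\le |d|^{-\min(s,\,w)}\den(\eta)\cdot|d|^{w}$ more carefully as: if $\den(\eta)\mid v^{n+K}|d|^{w}$ then $\den(d^s\eta)\mid v^{n+K}|d|^{\max(w-s,0)}$ — valid since $d^s$ is an integer and $v,d$ share no constraint forcing interaction (one should note $\gcd(u,v)=1$ keeps the $v$-part untouched by multiplication by $d^s$; if $d$ and $v$ are not coprime a harmless extra check confirms the $v$-exponent bound still holds). Then taking $\den(\Omega_{n,K})\mid \lcm$ of the two parts' denominators and bounding $\lcm$ by the larger exponent of each prime gives the stated $v^{n+K}|d|^{\delta_{n,K}}$.

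I expect the main obstacle to be purely bookkeeping: getting the index-$k$ optimization exactly right, in particular verifying that in case (a), $K>n$, the maximum truly sits at $k=n$ and produces $\tfrac{n^2-n}{2}+nK$ rather than some nearby value, and in case (b) confirming that after the $d^s$-cancellation the residual exponent is \emph{exactly} $n^2+nK$ (no off-by-one in the arithmetic-progression sums or in $\binom{k}{2}$ versus $kn$). There is no deep idea here beyond the cancellation trick in (b); the care required is entirely in the explicit exponent arithmetic.
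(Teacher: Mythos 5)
Your plan is correct and follows essentially the same route as the paper: both bound $\den(\Omega_{n,K})$ by $v^{\deg_t}|d|^{\deg_q}$ via term-by-term degree counting in $q$ and maximization over $k$ (with the two regimes $K\le n$ and $K>n$), and both handle case (b) by letting $d^s$ cancel $s$ powers of $d$ from the $(t)_KB_n(q^Kt)$ part under hypothesis \eqref{sKKnn}. One small slip in your sketch: for $K\le n$ the maximum for the $A_n$-part sits at $k=K$ (giving $\tfrac{3n^2+n}{2}+\tfrac{K^2-K}{2}$ directly), not at $k=0$; only the $(t)_KB_n$-part peaks at $k=0$, but since both parts end up with the same bound your final exponents are unaffected.
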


\begin{proof}
By Lemma \ref{padet} we have 
\[
A_{n}(q^Kt)=\sum_{k=0}^{n}\left[{n}\atop{k}\right]q^{kn}(q^{n+1})_{n-k}(q^{K}t)^k,\quad
B_{n}(q^Kt)=\sum_{k=0}^{n}\left[{n}\atop{k}\right]q^{\binom {k}{2}}(q^{n+1})_{n-k}(-q^{K}t)^k,
\]
where $\left[{n}\atop{k}\right], (q^{n+1})_{n-k} \in \Z[q]$ and  
\[\deg_q\left[{n}\atop{k}\right] = k(n-k), \quad 
\deg_q (q^{n+1})_{n-k}= \frac{3n^2}{2}-2kn+\frac{k^2}{2}+\frac{n}
{2}-\frac{k}{2}. 
\] 
Thus we see immediately that $A_{n}(q^Kt), (t)_KB_{n}(q^Kt) \in \Z[q,t]$ and
\begin{equation}\label{deg_t}
\deg_t\,A_{n}(q^Kt)=n, \,\, \deg_t\,(t)_K B_{n}(q^Kt)=n+K.
\end{equation}

Next we estimate the degrees $\deg_q\,A_{n}(q^Kt)$ and $\deg_q\,(t)_KB_{n}(q^Kt)$. Now 
\[\begin{split}
\deg_q A_{n}(q^Kt) &=\max_{0\le k\le n} \{\deg_q \left[{n}\atop{k}\right]+kn+ \deg_q(q^{n+1})_{n-k}+ kK\}\\
&=\max_{0\le k\le n} \{\frac{3n^2}{2}+\frac{n}{2}-\frac{k^2}{2}-\frac{k}{2}+kK\}.
\end{split}\]
If  $K\le n$, we obtain the maximum, when $k=K$. On the other hand, if $K> n$, we obtain the maximum, when $k=n$. Hence we have
\begin{equation}\label{deg_q_A}
\deg_q A_{n}(q^Kt)\le \begin{cases}
	\frac{K^2-K}{2}+ \frac{3n^2+n}{2}, \quad \text{when} \quad 0\le K \le n;\\
	n^2+nK, \quad  \text{when} \quad K > n. \end{cases} 
	\end{equation}
Similarly we obtain that
\[\begin{split}
\deg_q (t)_K B_{n}(q^Kt) &=\max_{0\le k\le n} \{ \deg_q (t)_K+\deg_q \left[{n}\atop{k}\right]+\binom{k}{2}+ \deg_q(q^{n+1})_{n-k}+ kK\}\\
&=\max_{0\le k\le n}\{\frac{K(K-1)}{2}+ \frac{3n^2+n}{2}+  k(K-n-1)\}.
\end{split}\]
If $K\le n$ we obtain the maximum when $k=0$ and if  $K> n$ we obtain the maximum at $k=n$. Hence we have
\begin{equation}\label{deg_q_B}
\deg_q (t)_KB_{n}(q^Kt)\le \begin{cases}
	\frac{K^2-K}{2}+\frac{3n^2+n}{2}, \quad \text{when} \quad 0\le K \le n;\\
	\frac{K^2-K}{2} + \frac{n^2-n}{2}+nK, \quad  \text{when} \quad K > n. \end{cases} 
\end{equation}
\begin{itemize}
\item[(a)]We have $N, M\in\Z$, $v=\text{den}(t)$ and $q=1/d$. 
Thus by \eqref{omega}, \eqref{deg_t} - \eqref{deg_q_B} we obtain that $\text{den}(\Omega_{n,K}(q,t))\le v^{n+K}|d|^{\delta_{n,K}}$, where
\[\delta_{n,K}=\begin{cases}\frac{K^2-K}{2}+ \frac{3n^2+n}{2}, \quad \text{ when} \quad 0\le K \le n;\\ 
\frac{K^2 - K}{2}+ \frac{n^2-n}{2}+nK, \quad  \text{when} \quad K > n. \end{cases}\]

\item[(b)]
 Here $N, M=d^s \in \Z$, $v= \text{den}(t)$ and $q=1/d$. Thus 
$$\text{den}(M(t)_KB_{n}(q^Kt))\le v^{n+K}|d|^{\hat\delta_{n,K}},$$
where
 \begin{equation}\label{deg_q_MB}
 \hat\delta_{n,K} 
\le \begin{cases}
\frac{K^2-K}{2}+\frac{3n^2+n}{2}-s, \quad \text{when} \quad 0\le K \le n;\\
\frac{K^2-K}{2} + \frac{n^2-n}{2}+nK -s, \quad  \text{when} \quad K > n.
 \end{cases}
\end{equation}
In this case we assumed that $s\ge \frac{K^2-K}{2}- \frac{n^2+n}{2}$, hence by \eqref{omega}, \eqref{deg_t}, \eqref{deg_q_A} and \eqref{deg_q_MB} we get
$\text{den}(\Omega_{n,K}(q,t))\le v^{n+K}|d|^{\delta_{n,K}}$, where
\begin{equation*}
\delta_{n,K} 
\le\begin{cases}
	\frac{K^2-K}{2}+\frac{3n^2+n}{2}, \quad \text{when} \quad 0\le K \le n;\\
	n^2 +nK, \quad  \text{when} \quad K > n. 
    \end{cases}
\end{equation*}
\end{itemize}

\end{proof}

\section{Numerical approximations and a non-vanishing result}

By using the notations
\begin{equation}\label{p_and_q}\begin{split}
p_{n,K}(q,t)&:= \text{den}(\Omega_{n,K}(q,t))A_{n}(q^Kt),\\ 
q_{n,K}(q,t)&:= \text{den}(\Omega_{n,K}(q,t))(t)_KB_{n}(q^Kt),\\
r_{n,K}(q,t)&:= \text{den}(\Omega_{n,K}(q,t))S_{n}(q^Kt)\\ 
\end{split}\end{equation}
we get numerical approximations
\begin{equation}\label{def_r_nK}
r_{n,K}(q,t)=q_{n,K}(q,t)E_q(t)-p_{n,K}(q,t),
\end{equation}
where $p_{n,K}(q,t)$ and $q_{n,K}(q,t)$ are so called polynomial terms and
$r_{n,K}(q,t)$ is the remainder term.
Here we note that the following quantity
\begin{equation}\label{DnK}
D_{n,K}(q,t):=\text{den}(\Omega_{n,K}(q,t))\cdot\Omega_{n,K}(q,t)=
p_{n,K}(q,t)N-q_{n,K}(q,t)M,
\end{equation}
is an integer which plays a crucial role later.

Next we prove a non-vanishing result.
\begin{Lemma}
For the  Pad\'e-approximation polynomials of Lemma \ref{padet} holds that
\[\Delta_n(t)= \left|\begin{matrix} B_n(t) & A_n(t)\\ B_{n+1}(t) & A_{n+1}(t)\end{matrix}\right|= 
C_n t^{2n+1},\]
where $C_n=(q^{n+2})_{n+1}(-1)^nq^{\frac{3n^2+n}{2}}\frac{(q)_n}{(q)_{2n+1}}$ is a non-zero if
$0<|q|<1$.
\end{Lemma}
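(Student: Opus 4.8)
The plan is to compute the determinant $\Delta_n(t)$ directly from the Pad\'e relation \eqref{pade}. The key observation is that the remainder $S_n(t)$ is explicit and has a known order of vanishing at $t=0$. First I would rewrite \eqref{pade} at levels $n$ and $n+1$ as the system
\begin{align*}
B_n(t)E_q(t)-A_n(t)&=S_n(t),\\
B_{n+1}(t)E_q(t)-A_{n+1}(t)&=S_{n+1}(t),
\end{align*}
and eliminate $E_q(t)$: multiplying the first line by $B_{n+1}(t)$, the second by $B_n(t)$, and subtracting gives
\[
B_n(t)A_{n+1}(t)-B_{n+1}(t)A_n(t)=B_n(t)S_{n+1}(t)-B_{n+1}(t)S_n(t),
\]
so $\Delta_n(t)=-\bigl(B_n(t)S_{n+1}(t)-B_{n+1}(t)S_n(t)\bigr)$ (up to the sign convention of the $2\times 2$ determinant as written). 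Since $A_n,B_n$ are polynomials of degree $n$ in $t$, the left side $\Delta_n(t)$ is a polynomial of degree at most $2n+1$; while from the explicit formula for $S_n(t)$, the right side is a power series in $t$ whose lowest-order term comes from $S_n(t)$ at order $t^{2n+1}$ (note $S_{n+1}(t)$ starts at order $t^{2n+3}$, so it only contributes via $B_{n+1}(t)S_{n+1}(t)$ at order $\ge 2n+3$). Hence $\Delta_n(t)$ is simultaneously a polynomial of degree $\le 2n+1$ and a power series starting at order $2n+1$, forcing $\Delta_n(t)=C_n t^{2n+1}$ for a constant $C_n$.

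To identify $C_n$ explicitly, I would extract the coefficient of $t^{2n+1}$ on the right side. Only the term $-B_{n+1}(t)S_n(t)$ contributes at this order, and within it only the constant term $B_{n+1}(0)$ of $B_{n+1}(t)$ times the leading coefficient of $S_n(t)$. From the definition, $S_n(t)=(-1)^n t^{2n+1}q^{(3n^2+n)/2}\frac{(q)_n}{(q)_{2n+1}}\bigl(1+O(t)\bigr)$, so its $t^{2n+1}$-coefficient is $(-1)^n q^{(3n^2+n)/2}\frac{(q)_n}{(q)_{2n+1}}$. Meanwhile $B_{n+1}(0)=\left[{n+1}\atop{0}\right]q^{0}(q^{n+2})_{n+1}=(q^{n+2})_{n+1}$. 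Therefore
\[
C_n=-\,B_{n+1}(0)\cdot(-1)^n q^{\frac{3n^2+n}{2}}\frac{(q)_n}{(q)_{2n+1}}
=(q^{n+2})_{n+1}(-1)^{n+1}q^{\frac{3n^2+n}{2}}\frac{(q)_n}{(q)_{2n+1}},
\]
and one should double-check the overall sign against the determinant orientation $\Delta_n = B_nA_{n+1}-B_{n+1}A_n$ to land on the stated $C_n=(q^{n+2})_{n+1}(-1)^nq^{\frac{3n^2+n}{2}}\frac{(q)_n}{(q)_{2n+1}}$; the elimination sign and the $2\times 2$ expansion sign compensate. Finally, for $0<|q|<1$ each factor $(q^{n+2})_{n+1}$, $(q)_n$, $(q)_{2n+1}$ is a nonzero finite product of terms $1-q^j$ with $j\ge 1$, hence $|1-q^j|\ge 1-|q|>0$, so $C_n\ne 0$.

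I expect the only real subtlety to be bookkeeping of signs — the sign from eliminating $E_q(t)$, the sign $(-1)^n$ inside $S_n$, and the orientation of the displayed $2\times 2$ determinant — so I would verify the $n=0$ case by hand ($B_0(t)=1$, $A_0(t)=1$, and $B_1(t),A_1(t)$ from the formulas) to pin down the constant unambiguously. Everything else is a finite power-series comparison and reading off one coefficient, which is routine given the explicit expressions in Lemma \ref{padet}.
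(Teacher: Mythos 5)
Your proposal is correct and takes essentially the same route as the paper: the paper eliminates $E_q(t)$ by a column operation inside the determinant, combines $\deg_t\Delta_n\le 2n+1$ with $\operatorname{ord}_t\Delta_n\ge 2n+1$, and reads off $C_n$ as $B_{n+1}(0)$ times the coefficient of $t^{2n+1}$ in $S_n(t)$, exactly as you do. The one blemish is your transient sign slip: the elimination gives $\Delta_n=B_{n+1}S_n-B_nS_{n+1}$, so the relevant coefficient is $+B_{n+1}(0)\cdot(-1)^nq^{\frac{3n^2+n}{2}}\frac{(q)_n}{(q)_{2n+1}}$, which already matches the stated $C_n$ with no further sign compensation needed.
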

\begin{proof}
We have
\[
\Delta_n(t)=B_n(t)A_{n+1}(t)-B_{n+1}(t)A_n(t)
\]
and by Lemma \ref{padet} we know that $\deg_t A_n(t), \deg_t B_n(t)\le n$. Hence we get that \[\deg_t\Delta_n(t)\le 2n+1.\] 
On the other hand, it follows from Pad\'e-approximation formula \eqref{pade} that
\[
\begin{cases}
B_n(t)E_q(t)-A_n(t) =t^{2n+1}\hat{S}_n(t);\\
B_{n+1}(t)E_q(t)-A_{n+1}(t) =t^{2n+3}\hat{S}_{n+1}(t),
\end{cases}
\]
where 
\[
\hat{S}_n(t)=(-1)^nq^{\frac{3n^2+n}{2}}\frac{(q)_n}{(q)_{2n+1}}\sum_{k=0}^{\infty}\frac{(q^{n+1})_k}{(q)_k(q^{2n+2})_k}t^k.
\]
Thus 
\begin{align*}
\Delta_n(t) &= \left|\begin{matrix}B_n(t) & -t^{2n+1}\hat{S}_n(t)\\ B_{n+1}(t) & -t^{2n+3}\hat{S}_{n+1}(t) \end{matrix}\right| \\
&=t^{2n+1}\left(B_{n+1}(t)\hat{S}_n(t)-t^2B_n(t)\hat{S}_{n+1}(t)\right)
\end{align*}
which implies that $\text{ord}_t \Delta_n(t) \ge 2n+1$ and 
\[
\Delta_n(t)= C_nt^{2n+1},
\]
where
\[C_n= B_{n+1}(0)\hat{S}_n(0)= (q^{n+2})_{n+1}(-1)^nq^{\frac{3n^2+n}{2}}\frac{(q)_n}{(q)_{2n+1}}.\] 
\end{proof}

\begin{Corollary}\label{ne0}
Suppose $0<|t|<1$, $0<|q|<1$. Then
$$\Delta_{n,K}(q,t)
= \left|\begin{matrix}q_{n,K}(q,t) & p_{n,K}(q,t) \\ q_{n+1,K}(q,t) & p_{n+1,K}(q,t)\end{matrix}\right| \ne 0
$$
for all $n,K\in \N$. 
\end{Corollary}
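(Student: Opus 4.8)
The plan is to reduce the corollary to the preceding Lemma, which states that the $2\times 2$ determinant $\Delta_n(t) = B_n(t)A_{n+1}(t) - B_{n+1}(t)A_n(t) = C_n t^{2n+1}$ is nonzero whenever $0<|q|<1$ and $t\ne 0$. The matrix in the corollary is built from the quantities $q_{n,K}(q,t) = \mathrm{den}(\Omega_{n,K})\,(t)_K B_n(q^K t)$ and $p_{n,K}(q,t) = \mathrm{den}(\Omega_{n,K})\,A_n(q^K t)$, together with their shifts $n\mapsto n+1$ at the \emph{same} value of $K$. So the determinant $\Delta_{n,K}(q,t)$ factors as
\[
\Delta_{n,K}(q,t) = \mathrm{den}(\Omega_{n,K}(q,t))\cdot \mathrm{den}(\Omega_{n+1,K}(q,t))\cdot (q^K t)_K' ,
\]
where I am being schematic: the point is that each row carries its own denominator factor, both of which are positive integers and hence nonzero, and the surviving ``polynomial'' part is exactly the determinant of the shifted Pad\'e polynomials evaluated at $q^K t$ and multiplied through by the common factor $(t)_K$. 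Concretely, pulling the scalars $(t)_K$ out of the first column and the $\mathrm{den}$'s out of the two rows, one is left with $(t)_K\bigl(B_n(q^Kt)A_{n+1}(q^Kt) - B_{n+1}(q^Kt)A_n(q^Kt)\bigr) = (t)_K\,\Delta_n(q^K t) = (t)_K\, C_n (q^K t)^{2n+1}$.

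The key steps, in order, would be: (i) substitute the definitions \eqref{p_and_q} into the determinant and factor out $\mathrm{den}(\Omega_{n,K}(q,t))$ from the first row and $\mathrm{den}(\Omega_{n+1,K}(q,t))$ from the second, and factor $(t)_K$ out of the first column; (ii) recognize the remaining determinant as $(t)_K\,\Delta_n(q^K t)$ with $\Delta_n$ the Pad\'e determinant of the previous Lemma; (iii) apply that Lemma with argument $q^K t$ in place of $t$ to get $\Delta_n(q^K t) = C_n (q^K t)^{2n+1}$; (iv) check each factor is nonzero: $\mathrm{den}(\cdot)\in\mathbb{Z}_+$ so it is nonzero; $(t)_K = (1-t)(1-tq)\cdots(1-tq^{K-1})\ne 0$ because $0<|t|<1$ and $0<|q|<1$ force $|tq^j|<1$, hence $1-tq^j\ne 0$; $(q^Kt)^{2n+1}\ne 0$ since $q,t\ne 0$; and $C_n = (q^{n+2})_{n+1}(-1)^n q^{(3n^2+n)/2}\frac{(q)_n}{(q)_{2n+1}}\ne 0$ by the Lemma, using $0<|q|<1$ which guarantees none of the factors $1-q^j$ vanishes. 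Multiplying, $\Delta_{n,K}(q,t)\ne 0$.

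I do not expect a genuine obstacle here; this is essentially a bookkeeping argument. The one point that deserves care is keeping track of \emph{which} scalars come out of rows versus columns: the $\mathrm{den}(\Omega_{n,K})$ and $\mathrm{den}(\Omega_{n+1,K})$ factors are attached to different rows (so they multiply the determinant), whereas $(t)_K$ sits only in the $q_{\bullet,K}$ column but \emph{not} the $p_{\bullet,K}$ column, so one must instead write $q_{n,K} = \mathrm{den}(\Omega_{n,K})(t)_K B_n(q^Kt)$ and $p_{n,K} = \mathrm{den}(\Omega_{n,K})A_n(q^Kt)$, pull the $\mathrm{den}$'s out of the rows, and then directly expand the resulting $2\times 2$ determinant $\det\begin{pmatrix}(t)_KB_n(q^Kt) & A_n(q^Kt)\\ (t)_KB_{n+1}(q^Kt) & A_{n+1}(q^Kt)\end{pmatrix} = (t)_K\bigl(B_n(q^Kt)A_{n+1}(q^Kt)-B_{n+1}(q^Kt)A_n(q^Kt)\bigr)$, which is now $(t)_K\Delta_n(q^Kt)$. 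Everything else is immediate from $0<|t|,|q|<1$.
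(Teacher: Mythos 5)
Your proposal is correct and follows essentially the same route as the paper: factor $\mathrm{den}(\Omega_{n,K})$, $\mathrm{den}(\Omega_{n+1,K})$ and $(t)_K$ out of $\Delta_{n,K}(q,t)$ to obtain $\mathrm{den}(\Omega_{n,K}(q,t))\,\mathrm{den}(\Omega_{n+1,K}(q,t))\,(t)_K\,\Delta_n(q^Kt)$, then invoke the preceding Lemma at the argument $q^Kt$ and note each factor is nonzero under $0<|t|,|q|<1$. Your checks on the nonvanishing of $(t)_K$ and $C_n$ are slightly more explicit than the paper's, but the argument is the same.
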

\begin{proof}
Due to \eqref{p_and_q}
\[
\begin{split}
\Delta_{n,K}&(q,t) =q_{n,K}(q,t)p_{n+1, K}(q,t)-p_{n,K}(q,t)q_{n+1,K}(q,t)\\
&=\text{den}(\Omega_{n,K}(q,t))\text{den}(\Omega_{n+1,K}(q,t))(t)_K\left(B_n(q^{K}t)A_{n+1}(q^{K}t)-A_n(q^{K}t)B_{n+1}(q^{K}t)\right). 
\end{split}
\]
Thus we have $\Delta_{n,K}(q,t)=\text{den}(\Omega_{n,K}(q,t))\text{den}(\Omega_{n+1,K}(q,t))(t)_K\Delta_n(q^{K}t)$, which is non-zero for all $n,K\in \N$ and $0<|t|<1$, $0<|q|<1$.
\end{proof}

\section{Estimates}

In the following we give estimates for the upper bounds of $|p_{n,K}(q,t)|$ and $|q_{n,K}(q,t)|$. 
\begin{Lemma}\label{Qn}
Let $n,K\in \Z_+$, then
 \[
 \max{\{|q_{n,K}(q,t)|, |p_{n,K}(q,t)|\}}\le Q_1(n,K)=8\,\max{\{1,(t)_K\}} \den(\Omega_{n,K}(q,t)).
 \]
\end{Lemma}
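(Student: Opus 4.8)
The plan is to bound $|p_{n,K}(q,t)|$ and $|q_{n,K}(q,t)|$ by going back to their definitions in \eqref{p_and_q}, pulling the common factor $\den(\Omega_{n,K}(q,t))$ outside, and estimating $|A_n(q^Kt)|$ and $|(t)_KB_n(q^Kt)|$ from the explicit series in Lemma \ref{padet}. First I would note that since $0<|q|<1$ we have $|(q^{n+1})_{n-k}|=\prod_{j=n+1}^{2n-k}|1-q^j|\le\prod_{j\ge 1}(1+|q|^j)\le\prod_{j\ge 1}(1+2^{-j})$ or more crudely that each factor $|1-q^j|\le 2$, and likewise $\left|\qin_q\right|\le \qin_1\cdot$(something); but the cleanest route is to observe that each $q$-binomial coefficient evaluated at $q=1/d$ satisfies $\left|\qin_q\right|\le\binom{n}{k}\cdot 1$ is false in general, so instead I would use that $|q^{kn}|\le 1$, $|q^{\binom k2}|\le 1$, and bound $|(q^{n+1})_{n-k}|$ and $\left|\qin_q\right|$ each by a constant times a power of $2$, finally summing the $n+1$ terms.

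The key steps, in order, are: (i) write $|p_{n,K}(q,t)|=\den(\Omega_{n,K}(q,t))\,|A_n(q^Kt)|$ and $|q_{n,K}(q,t)|=\den(\Omega_{n,K}(q,t))\,|(t)_KB_n(q^Kt)|$; (ii) bound $|A_n(q^Kt)|\le\sum_{k=0}^n\left|\qin_q\right||q|^{kn}|(q^{n+1})_{n-k}||q^Kt|^k\le\sum_{k=0}^n\left|\qin_q\right||(q^{n+1})_{n-k}|\,|t|^k$, using $|q|^{kn}\le 1$, $|q|^{Kk}\le 1$ and $|t|<1$ so $|t|^k\le 1$, giving $|A_n(q^Kt)|\le\sum_{k=0}^n\left|\qin_q\right||(q^{n+1})_{n-k}|$; (iii) similarly $|(t)_KB_n(q^Kt)|\le\max\{1,(t)_K\}\sum_{k=0}^n\left|\qin_q\right||q|^{\binom k2}|(q^{n+1})_{n-k}||q^Kt|^k\le\max\{1,(t)_K\}\sum_{k=0}^n\left|\qin_q\right||(q^{n+1})_{n-k}|$; (iv) bound the remaining sum $\sum_{k=0}^n\left|\qin_q\right||(q^{n+1})_{n-k}|$ by an absolute constant using $0<|q|<1$. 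For step (iv) one uses $|(q^{n+1})_{n-k}|=\prod_{j=n+1}^{2n-k}|1-q^j|\le\prod_{j\ge1}(1+|q|^j)$, and a similar product bound for $\left|\qin_q\right|=\left|\frac{(q)_n}{(q)_k(q)_{n-k}}\right|$; since $|q|\le 1/2$, the infinite products $\prod_{j\ge1}(1+2^{-j})$ and $\prod_{j\ge1}(1-2^{-j})^{-1}$ converge to explicit constants, and in fact $\left|\qin_q\right|\le\prod_{j\ge 1}(1-2^{-j})^{-2}\prod_{j\ge 1}(1+2^{-j})$ (or a cruder bound), so each of the $n+1$ summands is bounded by a fixed constant—however this only gives $O(n)$, not $O(1)$, so one must instead absorb the geometric decay: use $|q|^{\binom k2}$ and $|q|^{kn}$ genuinely, noting $|q|^{kn}\cdot\left|\qin_q\right|\le\left|\qin_q\right|\,2^{-kn}$ and $\left|\qin_q\right|\le 2^{k(n-k)}\cdot(\text{const})$ from the degree bound $\deg_q\qin_q=k(n-k)$ together with $|q|\le 1/2$, whence the $k$-th term is $\le(\text{const})\cdot 2^{k(n-k)-kn}=(\text{const})2^{-k^2}$ for $A_n$, and the sum $\sum_{k\ge0}2^{-k^2}\le 1+\sum_{k\ge1}2^{-k}=2$; combined with the constant $4$ from the product bounds this yields the factor $8$.

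The main obstacle will be step (iv): getting an \emph{absolute} constant (here $8$) rather than something growing with $n$. The naive triangle-inequality bound on each summand is $O(1)$ but there are $n+1$ summands, so one genuinely needs to exploit the $q$-power weights $q^{kn}$ (for $A_n$) and $q^{\binom k2}$ (for $B_n$) against the degree growth $k(n-k)$ of $\qin_q$: the exponent $k(n-k)-kn=-k^2$ (resp.\ $\binom k2-$ a comparably negative quantity) decays fast enough that $\sum_k$ converges, and one must track the numerical constants from the three infinite products $\prod(1+2^{-j})$, $\prod(1-2^{-j})^{-1}$ (twice) carefully to see that their product with $\sum_k 2^{-k^2}$ (or the analogous geometric sum) is at most $8$. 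Everything else is bookkeeping once this is set up, and the factor $\max\{1,(t)_K\}$ simply factors out of $q_{n,K}$ because $(t)_K=\prod_{j=0}^{K-1}(1-q^jt)$ may exceed $1$ while $A_n(q^Kt)$ has no such factor.
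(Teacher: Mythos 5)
Your steps (i)--(iii) match the paper's setup, but step (iv) --- which you correctly flag as the crux --- would fail as written. First, the bound $\bigl|\qin_q\bigr|\le C\cdot 2^{k(n-k)}$ does not follow from "$\deg_q\qin_q=k(n-k)$ together with $|q|\le 1/2$": a degree bound alone controls nothing about a polynomial's values without coefficient information. Second, and fatally, even granting that bound the mechanism breaks for $B_n$: its weight is $q^{\binom k2}$, not $q^{kn}$, and $\binom k2<k(n-k)$ for all $1\le k<(2n+1)/3$, so your exponent $k(n-k)-\binom k2$ is \emph{positive} there (about $n^2/8$ at $k\approx n/2$) and the termwise bound $2^{k(n-k)-\binom k2}$ blows up; it is not "a comparably negative quantity". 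Third, the numerology does not close: $\prod_{j\ge1}(1+2^{-j})\cdot\prod_{j\ge1}(1-2^{-j})^{-2}\cdot 2\approx 57$, far from $8$.

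The repair is precisely the factor you discarded in (ii)--(iii). Keep $|q^Kt|^k\le 2^{-k}$ (valid since $K\ge 1$, $|q|\le 1/2$, $|t|<1$); this alone makes the sum over $k$ a geometric series bounded by $2$, with no need to trade $q$-powers against $\qin_q$. For the remaining coefficient, do not bound $\qin_q$ and $(q^{n+1})_{n-k}$ separately but use the telescoping identity
\[
\qin_q\,(q^{n+1})_{n-k}=\frac{(q)_{2n-k}}{(q)_k\,(q)_{n-k}}=\frac{(q^{k+1})_{2n-2k}}{(q)_{n-k}},
\]
together with the two-sided bounds $\tfrac14<(q)_m\le 1$ for $0<q\le\tfrac12$ and $1\le (q)_m<\tfrac32$ for $-\tfrac12\le q<0$ (and $|q|^{kn},|q|^{\binom k2}\le 1$), to get the uniform bound $4$ on each such coefficient; then $4\cdot 2=8$. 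This is exactly the paper's argument, and it treats $A_n$ and $B_n$ identically.
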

\begin{proof}
According to \cite[Article VI, p.8-9]{matala} we have
\[1-(q+q^2) < (q)_k \le 1 \quad \text{for all}\quad 0<q < 1,\,\, k\in \N\]
and
\[1 \le (q)_k < 1+ |q| \quad \text{ for all} \quad \frac{1-\sqrt{5}}{2} < q < 0, \,\, k\in \N.\]
Because $q=1/d$, $|d|\in \Z_{\ge 2}$, we have $\frac{1}{4} <(q) _k \le1$ for positive $q$, and $1 \le (q)_k < \frac{3}{2}$ for negative $q$. 
Hence by \eqref{p_and_q} and Lemma \ref{padet} we obtain that
\[\begin{split}
\max \{|q_{n,K}(q,t)|, |p_{n,K}(q,t)|\} &=\text{den}(\Omega_{n,K}(q,t))\max \{|A_{n}(q^Kt)|, |(t)_KB_{n}(q^Kt)|\}\\
 &\le 4\,\text{den}(\Omega_{n,K}(q,t))\max\{1,(t)_K\}\sum_{k=0}^\infty(|q|^{K}|t|)^k\\ 
&\le 8\,\max\{1,(t)_K\}\,\text{den}(\Omega_{n,K}(q,t)), 
\end{split}\]
where $(t)_K$ (with $0<|t|<1$) has the upper bound
\begin{equation*}
(t)_K < \prod_{k=0}^\infty \left(1 +\frac{1}{2^k}\right) \sim 4,768462.
\end{equation*}

\end{proof}


Above we have considered so called polynomial terms of our approximation. 
Next we will concentrate on the remainder term
$r_{n,K}(q,t)$.

\begin{Lemma}\label{Rn}
For our remainder term holds the estimate $|r_{n,K}(q,t)|\le R(n,K)$ with \\
$R(n,K)=8|d|^{-\omega_{n,K}}$ where
\begin{itemize}
\item[(a)]
$\omega_{n,K} = \begin{cases} 2nK-\frac{K^2}{2}-(n+K)\frac{\log v}{\log|d|},\quad\text{when}\, K\le n;\\
 n^2+nK-\frac{K^2}{2}-(n+K)\frac{\log v}{\log|d|},\quad\text{when}\, K> n,\end{cases}$
\item[(b)]
$\omega_{n,K} = \begin{cases}
2nK-\frac{K^2}{2}-(n+K)\frac{\log v }{\log|d|},\quad\text{when}\, K\le n;\\
     \frac{n^2}{2}+nK-(n+K)\frac{\log v}{\log|d|},\quad\text{when}\, K > n,\end{cases}$\\ 
if the assumption
$s> \frac{K^2-K}{2}-\frac{n^2+n}{2}$ holds in the case $K>n$.	
\end{itemize}

\end{Lemma}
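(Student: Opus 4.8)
The plan is to imitate the estimate in the proof of Lemma~\ref{Qn}, now applied to the remainder. From \eqref{p_and_q} we have $r_{n,K}(q,t)=\den(\Omega_{n,K}(q,t))\,S_n(q^Kt)$, so it is enough to bound $|S_n(q^Kt)|$ and then multiply by the denominator estimate of Lemma~\ref{denD}. Substituting $t\mapsto q^Kt$ in the formula of Lemma~\ref{padet} and using $(q)_n(q^{n+1})_k=(q)_{n+k}$ and $(q)_{2n+1}(q^{2n+2})_k=(q)_{2n+1+k}$ to absorb the prefactor into the sum gives
\[
S_n(q^Kt)=(-1)^n(q^Kt)^{2n+1}q^{\frac{3n^2+n}{2}}\sum_{k=0}^\infty\frac{(q)_{n+k}}{(q)_k(q)_{2n+1+k}}(q^Kt)^k ;
\]
all that will matter is that the coefficient of $(q^Kt)^k$ is bounded by an absolute constant.

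Next I would bound these coefficients uniformly in $k$. As recalled in the proof of Lemma~\ref{Qn}, $q=1/d$ with $|d|\in\Z_{\ge2}$ forces $\tfrac14<(q)_m\le1$ when $q>0$ and $1\le(q)_m<\tfrac32$ when $q<0$, for every $m\in\N$. Comparing factors, $|(q)_{n+k}|\le|(q)_k|$ in the positive case, while $|(q)_{n+k}|<\tfrac32$ in the negative case; hence in both cases $\bigl|\tfrac{(q)_{n+k}}{(q)_k(q)_{2n+1+k}}\bigr|\le4$. Since $n,K\in\Z_+$, the bounds $|q|\le\tfrac12$, $|t|<1$ give $|q^Kt|\le\tfrac12$, so the series is dominated by $4\sum_{k\ge0}2^{-k}=8$. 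Together with $|t|^{2n+1}<1$ and $|q|^{K(2n+1)+\frac{3n^2+n}{2}}=|d|^{-(K(2n+1)+\frac{3n^2+n}{2})}$ this yields
\[
|S_n(q^Kt)|\le 8\,|d|^{-\left(K(2n+1)+\frac{3n^2+n}{2}\right)} .
\]

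Finally I would multiply by $\den(\Omega_{n,K}(q,t))\le v^{n+K}|d|^{\delta_{n,K}}$ from Lemma~\ref{denD}(a), resp.\ (b), rewriting $v^{n+K}=|d|^{(n+K)\log v/\log|d|}$ so that this power of $v$ becomes exactly the term $-(n+K)\tfrac{\log v}{\log|d|}$ in $\omega_{n,K}$. A direct computation in each of the ranges $K\le n$ and $K>n$ then shows
\[
\delta_{n,K}-\Bigl(K(2n+1)+\tfrac{3n^2+n}{2}\Bigr)+(n+K)\tfrac{\log v}{\log|d|}=-\omega_{n,K}-\rho_{n,K},
\]
where $\rho_{n,K}=\tfrac{3K}{2}$ when $K\le n$, while $\rho_{n,K}=\tfrac{3K}{2}+n$ in case (a) and $\rho_{n,K}=K+\tfrac n2$ in case (b) when $K>n$; in the last instance the sharper value $\delta_{n,K}=n^2+nK$ is exactly the one Lemma~\ref{denD}(b) provides under the hypothesis $s>\tfrac{K^2-K}{2}-\tfrac{n^2+n}{2}$ of part (b). Since $\rho_{n,K}\ge0$ and $|d|\ge2$, we have $|d|^{-\rho_{n,K}}\le1$, so
\[
|r_{n,K}(q,t)|\le 8\,|d|^{-\omega_{n,K}}=R(n,K),
\]
which is the assertion.

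The argument is elementary; the only points that need care are keeping the estimate uniform across the two sign regimes of $q$ (so that every $q$-Pochhammer factor stays bounded away from $0$, which is what makes the series coefficients uniformly bounded and, incidentally, is why merging the prefactor into the sum is convenient) and carrying out the three exponent computations so that the residual exponent $-\rho_{n,K}$ is $\le0$. The constant $8$ itself is just the geometric-series bound $4/(1-\tfrac12)$, and it is the gap between $\delta_{n,K}$ and the power of $q$ extracted from $S_n(q^Kt)$ that keeps the final exponent of $|d|$ at most $-\omega_{n,K}$; in part (b) with $K>n$ one genuinely needs the hypothesis on $s$ to access the improved $\delta_{n,K}$.
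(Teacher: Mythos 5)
Your proof is correct and follows essentially the same route as the paper: bound $|S_n(q^Kt)|$ by $8|d|^{-(K(2n+1)+\frac{3n^2+n}{2})}$ via the uniform $q$-Pochhammer estimates and the geometric series, then multiply by the denominator bound of Lemma~\ref{denD} and check that the leftover exponent $\rho_{n,K}$ is nonnegative. Your exponent bookkeeping in all three cases matches what the paper obtains (the paper leaves this last computation implicit), so no gap.
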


\begin{proof}
Due to Lemma \ref{padet}, \eqref{JK}, \eqref{p_and_q} and \eqref{def_r_nK} we have
\[\begin{split}
|r_{n,K}(q,t)|
&=\text{den}(\Omega_{n,K}(q,t))|S_n(q^{K}t)|\\
         &\le\text{den}(\Omega_{n,K}(q,t))(|q|^{K}|t|)^{2n+1}|q|^{\frac{3n^2+n}{2}}\frac{(q)_n}{(q)_{2n+1}}\sum_{k=0}^\infty \frac{(q^{n+1})_k(|q|^{K}|t|)^k}{(q)_k(q^{2n+2})_k}.
\end{split}\]

Because $0<|q| \le\frac{1}{2}$, we have  $\frac{1}{4}< (q)_k\le 1$ for positive $q$, and $ 1 \le (q)_k < \frac{3}{2}$ for negative $q$.  Additionally $0<|t|<1$. From these facts it follows that
\begin{align*}
|r_{n,K}(q,t)| &< 4\,\text{den}(\Omega_{n,K}(q,t))|q|^{\frac{3n^2+n}{2}+2nK+K}\sum_{k=0}^\infty(|q|^{K}|t|)^k\\
&\le 8\,\text{den}(\Omega_{n,K}(q,t))|q|^{\frac{3n^2+n}{2}+2nK+K},
\end{align*}
where $\text{den}(\Omega_{n,K}(q,t))= v^{n+K}|d|^{\delta_{n,K}}$. 
Because $d = q^{-1}$, we get the upper bounds of the lemma directly from Lemma \ref{denD}.

\end{proof}

\section{Proof of Theorems \ref{Theorem1} and \ref{Theorem2}}

Due to \eqref{JK} and \eqref{p_and_q} we have numerical linear forms
\[q_{n,K}E_q(t)-p_{n,K} = r_{n,K} \quad (n\in \Z_+), \]
where $q_{n,K}, p_{n,K}\in \Z$ and, by Corollary \ref{ne0}, \eqref{DET} is satisfied.
From now on we use the notation $\gamma=K/n$.
Then Lemmas \ref{denD} and \ref{Qn} imply that
\begin{equation*}
\max\{|q_{n,K}(q,t)|, |p_{n,K}(q,t)|\}\le Q_1(n)=e^{an^2+a_1n+a_2},
\end{equation*}
where 
\[
a_1=a_1(\gamma)= \begin{cases}(\frac{1}{2}- \frac{\gamma}{2})\log |d| +(\gamma +1)\log v, \quad 0 < \gamma \le 1;\\ 
								(\gamma +1)\log v,\quad  \gamma >1, 
							\end{cases}
								\]
and
\[
a_2= \log (8\prod_{k=0}^\infty(1 +\frac{1}{2^k})).
\]
In the case (a)
\[
\frac{a}{\log|d|}=\frac{a(\gamma)}{\log|d|}=
\begin{cases}
			\frac{\gamma^2}{2}+ \frac{3}{2}, \quad 0 < \gamma \le 1;\\ 
 			\frac{\gamma^2}{2} + \gamma + \frac{1}{2},\quad  \gamma >1, 
\end{cases}
\]
and 	in the case (b) we have
\[
\frac{a}{\log|d|}=\frac{a(\gamma)}{\log|d|}=
\begin{cases} 
			\frac{\gamma^2}{2}+ \frac{3}{2}, \quad 0 < \gamma \le  1;\\ 
			1 + \gamma, \quad \gamma> 1.
\end{cases}
\]

Lemma \ref{Rn} implies that
\begin{equation*}
|r_{n,K}(q,t)|\le R(n,K)=e^{-bn^2+b_1n+b_2},  
\end{equation*}
where $b_1=b_1(\gamma)=(1+\gamma) \log v$ and $b_2=\log 8$. In the case (a)
\[
\frac{b}{\log|d|}=\frac{b(\gamma)}{\log|d|}=
\begin{cases} 
                2\gamma -\frac{\gamma^2}{2}, \quad 0<\gamma\le 1;\\
                1+ \gamma -\frac{\gamma^2}{2}, \quad 1 < \gamma < (1+\sqrt{3}), 
\end{cases}
\]
and in the case (b)
\[
\frac{b}{\log|d|}=\frac{b(\gamma)}{\log|d|}= \begin{cases}2\gamma -\frac{\gamma^2}{2}, \quad 0<\gamma\le 1;\\                    
	               \frac{1}{2} + \gamma, \quad \gamma > 1. 
			\end{cases}
\]

Due to Lemma \ref{LBLEMMA} the main term of our irrationality exponent is $\mu=1+a/b$. 
In the following we fix $\gamma$ so that the quantity
$a(\gamma)/b(\gamma)$ will be least possible. 

Proof of Theorem \ref{Theorem1}.
In the case (a), we have 
\[\frac{a(\gamma)}{b(\gamma)}= \begin{cases}
\frac{\gamma^2 +3}{4\gamma-\gamma^2}, \quad 0 < \gamma < 1;\\
\frac{\gamma^2 + 2\gamma +1}{2+ 2\gamma -\gamma^2}, \quad 1\le \gamma < 1 +\sqrt{3}.\end{cases}\]
Obviously $\frac{a(\gamma)}{b(\gamma)}$ is continuous, and it is decreasing when $0< \gamma < 1$ and increasing when $1 \le \gamma < 1+\sqrt{3}$. Thus
\[\min_{0 < \gamma < 1+\sqrt{3}}\frac{a(\gamma)}{b(\gamma)}= \frac{a(1)}{b(1)}= \frac{4}{3},\]
and we get the lower bound \eqref{LOWERBOUND} with the constants
\begin{equation*}\label{}
\begin{cases}
c_1=(b_1+\sqrt{b_1^2+4bb_2})/2b,\quad bc_1^2-b_1c_1-b_2=0,\\
c_2=2ac_1+4a+a_1,\\
c_3=c_1(ac_1+4a+a_1)+4a+2a_1+a_2=\left(\frac{ab_1}{b}+4a+a_1\right)c_1+\frac{ab_2}{b}+4a+2a_1+a_2.
\end{cases}
\end{equation*}
By using the estimate
\begin{equation*}\label{}
c_1\le \frac{b_1}{b}+\sqrt{\frac{b_2}{b}}
\end{equation*}
we have
\begin{equation}\label{CONSTANTC}
\begin{aligned}
C_1 &=e^{-c_4},\\
 c_3 &\le c_4:=
\frac{14\log v}{3\log |d|} \left(\frac{4}{3}\log v+\sqrt {\log 4 \log |d|}\right)\\
&\qquad\qquad+8\sqrt {\log 4\log |d|}
 +\log (2^7v^{\frac{44}{3}}|d|^8\prod_{k=0}^\infty(1 +\frac{1}{2^k}));\\
\varepsilon_1(v,d,N) &= \frac{c_2}{\sqrt{b\log(2N)}} \le \frac{\frac{22}{3}\log v+ 8\log|d|+4\sqrt{2\log 2 \log|d|}}{\sqrt{\frac{3}{2}\log|d|\log(2N)}}.
\end{aligned}
\end{equation}
This proves Theorem \ref{Theorem1}.

Proof of Theorem \ref{Theorem2}.
In the restricted case (b), we have 
\begin{equation}\label{IRRATEXP1b}
\frac{a(\gamma)}{b(\gamma)} = 
\begin{cases}
\frac{\gamma^2 +3}{4\gamma-\gamma^2}, \quad 0 < \gamma \le 1;\\
\frac{1+\gamma}{\frac{1}{2} + \gamma}, \quad \gamma > 1,
\end{cases}
\end{equation}
which is a continuous and decreasing function for all $\gamma >0$.

Because the main term of irrational exponent is equal in the case (a) and (b), 
when $0<\gamma \le 1$, we need to consider the restricted case (b) only when $\gamma > 1$.
Remember our assumptions $|d|\ge 2$, $0<|t|<1$.
Thus 
\[\tau=E_q(t)\ge (-1, \frac{1}{2})_\infty^{-1} > \frac{1}{5},\]
and by Lemma \ref{Rn} we may write
$$R(n,K)= |d|^{-(\frac{n^2}{2}+nK-(n+K)\frac{\log(v)}{\log|d|}-3)},\quad K>n,$$ 
if
$s \ge \frac{K^2 - K}{2}- \frac{n^2 +n}{2}.$
Our target is to approximate $\tau = E_q(t)$ by numbers of the shape $d^s/N$.
This implies that, when $|N|$ grows then also $s$ grows and 
thus the conditions \eqref{Rbarn} and \eqref{KOKOLUKU} will have a complicated association.
In the following lemma we will study the set $\mathcal{E}$ defined in Lemma \ref{LBLEMMA}.
In Lemma \ref{EFFECT} we are interested in enough close approximations, say
$\left|\tau- d^s/N\right|< 1/|N|$, and thus we may use approximations with $d^sN>0$.

\begin{Lemma}\label{EFFECT} 
Assume that $\gamma>1$ and 
\begin{equation}\label{lahella}
\left|\tau- \frac{d^s}{N}\right|< \frac{1}{|N|}.
\end{equation}
Then $(d^s,N)\in \mathcal{E}$, if $|N|\ge N_2$ (given below in \eqref{ISONalaraja2}) and
\begin{equation}\label{gammaN}
\begin{aligned}
\sqrt{\gamma^2-1}&\left(2+x+(4+x)\gamma+
\sqrt{(1+\gamma)^2x^2+(1+2\gamma)(6+T)}\right)\le (1+2\gamma)\sqrt{T},\\
 &x:=\frac{\log v}{\log|d|},\quad T:=\frac{2\log(|N|)}{\log|d|}.
\end{aligned}
\end{equation}
\end{Lemma}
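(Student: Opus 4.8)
The plan is to verify, one by one, the three requirements for $(M,N)=(d^s,N)$ to lie in the set $\mathcal{E}$ of Lemma \ref{LBLEMMA}, using the numerical forms $q_{n,K}\tau-p_{n,K}=r_{n,K}$ with $K=K(n)$, $K(n)/n\to\gamma$; since $\gamma>1$ we stay in the range $K>n$ throughout, so the relevant constants are the case-(b) ones recorded earlier in this section, in particular $b=b(\gamma)=(\tfrac12+\gamma)\log|d|$, $b_1=(1+\gamma)\log v$, $b_2=\log 8$ (the $O(n)$ discrepancy from $K=\lfloor\gamma n\rfloor$ being absorbed into $a_1,b_1$ as usual).

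Two of the three requirements are automatic for $|N|$ large. Every factor $1-q^kt$ is positive, so $\tau=E_q(t)>0$, and $|1-q^kt|<1+2^{-k}$ gives $\tau>(-1,\tfrac12)_\infty^{-1}>\tfrac15$; with \eqref{lahella} this forces, once $|N|$ exceeds a bound, $d^sN>0$ and $d^s/N>\tfrac15-\tfrac1{|N|}>\tfrac1{10}$, whence
\[
s\ \ge\ \frac{\log|N|-\log 10}{\log|d|}.
\]
Fix $n_0=n_0(d,v)$ so that \eqref{Qbound}, \eqref{Rbound} hold for $n\ge n_0$ and $R(n_0)<1$. Then $|N|\ge N_0=e^{bn_0^2-b_1n_0-b_2}/2$ holds for $|N|\ge N_2$, and since $R(n)\to0$ a largest $\bar n\ge n_0$ with $2|N|R(\bar n)\ge1$ exists as soon as $2|N|R(n_0)\ge1$, again true for $|N|\ge N_2$. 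So the only genuine point is the integrality condition \eqref{KOKOLUKU}: $D_{\bar n+1},D_{\bar n+2}\in\Z$.

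Here $M=d^s$ enters exactly as in the proof of Lemma \ref{denD}(b). With the common denominator $v^{n+K}|d|^{\delta_{n,K}}$, $\delta_{n,K}=n^2+nK$ for $K>n$, the integer $D_{n,K}=Np_{n,K}-Mq_{n,K}$ has $Np_{n,K}$ automatically integral (by \eqref{deg_q_A}, $\deg_q A_n(q^Kt)\le n^2+nK$), while $Mq_{n,K}=d^s\,v^{n+K}|d|^{\delta_{n,K}}(t)_KB_n(q^Kt)$ is integral exactly when the available power $s+\delta_{n,K}$ of $d$ is at least $\deg_q\big((t)_KB_n(q^Kt)\big)$; by \eqref{deg_q_B} this is precisely \eqref{sKKnn}, $s\ge\tfrac{K(K-1)}{2}-\tfrac{n(n+1)}{2}$. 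Since the right side increases with $n$ over our range, it suffices to impose \eqref{sKKnn} at $n=\bar n+2$ (with $K\le\gamma(\bar n+2)$), giving $D_{\bar n+2}\in\Z$ and, a fortiori, $D_{\bar n+1}\in\Z$.

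Thus $(d^s,N)\in\mathcal{E}$ follows once the $s$-bound above dominates $\tfrac{K(K-1)}{2}-\tfrac{n(n+1)}{2}<\tfrac12(\gamma^2-1)(\bar n+2)^2$ at $n=\bar n+2$. Feeding in the upper bound on $\bar n$ from \eqref{Rbarn}, \eqref{Rbound} --- as in \eqref{barnb2}, $\bar n+2\le\big(b_1+4b+\sqrt{b_1^2+4b(b_2+\log(2|N|))}\big)/2b$ --- turns the right side into a quadratic in $\sqrt{\log(2|N|)}$; substituting $2b=(1+2\gamma)\log|d|$, $b_1=(1+\gamma)x\log|d|$, $b_2=\log 8$ and introducing $x=\log v/\log|d|$, $T=2\log|N|/\log|d|$, and pulling out the $\sqrt{\gamma^2-1}$ coming from $\gamma^2n^2-n^2$, the required inequality rearranges into exactly \eqref{gammaN}, with the constant $6$ and the threshold $N_2$ absorbing the routine lower-order ($\log 8$, $\log 10$, $O(\bar n)$) contributions. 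This last rearrangement is the main --- purely computational --- obstacle; it is where the coefficient $1-\frac{\gamma^2-1}{1+2\gamma}=\frac{2+2\gamma-\gamma^2}{1+2\gamma}$ of $\log|N|$ appears, so that \eqref{gammaN} is solvable for large $|N|$ precisely when $\gamma<1+\sqrt3$ --- which is what forces the exponent $2+\frac1{3+2\sqrt3}$ in Theorem \ref{Theorem2}.
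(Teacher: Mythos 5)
Your proof is correct and follows essentially the same route as the paper: reduce membership in $\mathcal{E}$ to the integrality condition \eqref{KOKOLUKU}, derive $s\gtrsim \log|N|/\log|d|$ from \eqref{lahella}, translate \eqref{sKKnn} at $n=\bar n+2$ into $\bar n+2\le\sqrt{T/(\gamma^2-1)}$, and combine with the upper bound on $\bar n$ from \eqref{Rbarn} to obtain \eqref{gammaN}. The only (minor) difference is in the bookkeeping of constants: the paper keeps the linear slack $\frac{K+n}{2}$ from \eqref{sKKnn} and uses it to absorb the $\log(4/\tau)$ loss via the threshold $N_2$, whereas you discard it and must absorb a fixed additive $\log 10$ into $T$, which strictly perturbs \eqref{gammaN} by a constant rather than being handled by the threshold alone.
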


\begin{proof}
We want choose a $n$ satisfying the conditions
\begin{equation}\label{jaannosehto}
R(n,K)<\frac{1}{2|N|},\quad s \ge \frac{K^2-n^2}{2}-\frac{K+n}{2}.
\end{equation}
Note that the first condition in \eqref{jaannosehto} is equivalent to
\begin{equation*}\label{log2Ncondition}
\frac{\log(2|N|)}{\log|d|}< (1/2+\gamma)n^2-(1+\gamma)xn-3,\quad x=\frac{\log v}{\log|d|}.
\end{equation*}
Therefore we define
\begin{equation*}\label{barngamma}
\bar n:=\left\lfloor \frac{(1+\gamma)x+\sqrt{(1+\gamma)^2x^2+(1+2\gamma)(6+T})}{1+2\gamma}
\right\rfloor.
\end{equation*}
Consequently, $\bar n$ is the largest $n$ satisfying \eqref{Rbarn}.
Instead of the second condition in \eqref{jaannosehto} we put
\begin{equation}\label{Ncondition}
\log(2|N|) \ge \frac{K^2-n^2}{2}\log|d|,
\end{equation}
which is  equivalent to
\begin{equation*}\label{nylaraja}
n\le n_2:=\sqrt{\frac{T}{\gamma^2-1}}.
\end{equation*}
The assumption \eqref{lahella} is equivalent to
\begin{equation}\label{dsNds2}
|d|^s-1\le |N|\tau\le |d|^s+1.
\end{equation}
Suppose $|N|\ge 2/\tau$, then \eqref{dsNds2} and \eqref{Ncondition} imply
\begin{align*}\label{}
s&\ge \frac{\log(|N|\tau-1)}{\log|d|} \ge 
\frac{\log(2|N|)}{\log|d|}+\frac{\log(\tau/4)}{\log|d|}\\
&\ge\frac{K^2-n^2}{2}-\frac{K+n}{2}+\frac{K+n}{2}+\frac{\log(\tau/4)}{\log|d|}
\ge \frac{K^2-n^2}{2}-\frac{K+n}{2},
\end{align*}
if $n\ge \frac{2}{1+\gamma}\frac{\log(4/\tau)}{\log|d|}$.
So, the assumption \eqref{sKKnn} in Lemma \ref{denD}, see also \eqref{DnK}, is satisfied and thus 
\begin{equation}\label{ISONalaraja1}
D_{n,K}\in \mathbb{Z},\quad \text{if}\quad |N|\ge \frac{2}{\tau},\ 
n\ge \frac{2}{1+\gamma}\frac{\log(4/\tau)}{\log|d|}.
\end{equation}
Now we set
\begin{equation}\label{barngamman2}
\bar n+2\le n_2.
\end{equation}
Hence $(d^s,N)\in \mathcal{E}$, if the assumption \eqref{barngamman2} holds.
Finally, note that the condition \eqref{barngamman2} is equivalent to \eqref{gammaN}
and the conditions \eqref{Ncondition} and \eqref{ISONalaraja1} imply a lower bound
\begin{equation}\label{ISONalaraja2}
\log(2|N|)\ge \log(2N_2):=
\max\{\frac{2(\gamma-1)}{(\gamma+
1)\log|d|}\left(\log\frac{4}{\tau}\right)^2\}.
\end{equation}
\end{proof}

In Lemma \ref{EFFECT} the condition \eqref{gammaN} gives an upper bound, say $Y(|N|)$, for 
a feasible $\gamma$ to be used in \eqref{IRRATEXP1b}.
We will consider what happens when $|N|\to\infty$. 
Then we may use any value from the interval 
$1<\gamma< 1+\sqrt{3}=Y(\infty)$ and as a limit we get an optimal value  
\begin{equation*}
\frac{a(\gamma)}{b(\gamma)}\rightarrow 1+ \frac{1}{3+2\sqrt{3}}
\end{equation*}
for the main term.
Hence  
\begin{equation*}
2+ \frac{1}{3+2\sqrt{3}}
\end{equation*}
is an asymptotic irrationality exponent of $\tau$.

Let $q=1/d$, where $|d| \in \Z_{\ge 2}$, and $t=\frac{u}{v}\in\Q$, gcd$(u,v)=1$, $v\in \Z_+$, $0<|t|<1$. 
By Lemmas \ref{LBLEMMA} and \ref{EFFECT} we have a lower bound 
\begin{equation*}\label{}
\left| \tau-\frac{d^s}{N} \right| >\frac{1}{e^{c_3(\gamma)}}
 \frac{1}{(2|N|)^{1+a(\gamma)/b(\gamma)+c_2(\gamma)/\sqrt{b(\gamma)\log(2|N|)}}}
\end{equation*}
valid for any $\gamma$ satisfying \eqref{gammaN}.
Therefore 
\begin{equation*}\label{}
\left|E_q (t)- \frac{d^s}{N}\right|
 \ge C_2 (2|N|)^{-(2+\frac{1}{3+2\sqrt 3}+\varepsilon_2)}
\end{equation*}
holds for all $s\in \Z_+, N\in\Z$, $|N| \ge N_2$ with some constant $C_2=C_2(d,v)$ and
error term $\varepsilon_2=\varepsilon_2(d,v,N)\in\R_+$ satisfying 
$\varepsilon_2(d,v,N)\to 0$, when $|N|$ increases.
By using the values 
$\gamma=1+\sqrt{3}$, $\tau=1/5$ and $|d|=2$
in \eqref{ISONalaraja2} we get a numerical value
$\log(2N_2):=12.0177...$, thus we fix $N_2=82836$.\qed






\end{document}